\documentclass[a4paper,oneside]{amsart}
\pdfoutput=1

\usepackage{hyperref}
\usepackage{geometry}

\usepackage[utf8]{inputenc}
\usepackage[T1]{fontenc}
\usepackage{lmodern}

\usepackage{amsmath,amssymb,amsthm}
\usepackage[dvipsnames]{xcolor}

\usepackage{bbold}
\usepackage{bm}

\newcommand{\bb}[1]{\boldsymbol{#1}}


\definecolor{cb-yellow}{RGB}{221,170,51}
\definecolor{cb-red} {RGB}{187,85,102}
\definecolor{cb-teal}{RGB}{0,153,136}
\definecolor{cb-blue} {RGB}{0,68,136}
\definecolor{cb-green}{RGB}{17,119,51}
\definecolor{cb-purple} {RGB}{170,68,153}
\definecolor{cb-palegrey} {RGB}{221,221,221}

\usepackage{tikz,pgfplots}
\usetikzlibrary{positioning}
\pgfplotsset{compat=1.18}
\usepackage[backend=biber,style=alphabetic,maxnames=99]{biblatex}
\addbibresource{biblio.bib}
\AtBeginBibliography{\small}  

\hypersetup{
  pdfauthor={Michael Borinsky, Chiara Meroni, Maximilian Wiesmann},
  pdftitle={Asymptotic number of edge-colored regular graphs},
  pdfsubject={},
  pdfkeywords={},
    colorlinks=true,
    linkcolor=cb-red,
    filecolor=magenta,      
    urlcolor=cb-purple,
    citecolor=cb-yellow,
}

\newtheorem{theorem}{Theorem}
\numberwithin{theorem}{section}

\newtheorem{proposition}[theorem]{Proposition}
\newtheorem{lemma}[theorem]{Lemma}

\newtheorem{definition}[theorem]{Definition}

\theoremstyle{remark}
\newtheorem{remark}[theorem]{Remark}

\newenvironment{example}
{\pushQED{\qed}\examplex}
{\popQED\endexamplex}

\newcommand{\GG}{\mathcal{G}}

\DeclareMathOperator{\argmin}{argmin}

\newcommand{\R}{\mathbb{R}}

\newcommand{\C}{\mathbb{C}}
\newcommand{\Z}{\mathbb{Z}}

\DeclareMathOperator{\crit}{crit}
\DeclareMathOperator{\Aut}{Aut}


\renewcommand{\d}{\mathrm{d}}

\newcommand{\cE}{{P}}

\DeclareMathOperator{\Sym}{\mathbb{S}}
\DeclareMathOperator{\Hess}{Hess}

\newcommand\restr[2]{{
  \left.\kern-\nulldelimiterspace 
  #1
  \vphantom{\big|} 
  \right|_{#2}
  }}

\title{Asymptotic number of edge-colored regular graphs}
\date{}
\author{Michael Borinsky \and Chiara Meroni \and Maximilian Wiesmann}

\begin{document}

\begin{abstract}
  We prove a formula for the asymptotic number of edge-colored regular graphs with a prescribed set of allowed vertex-incidence structures. The formula depends on specific critical points of a polynomial encoding the vertex-incidences. As an application, we compute the expected number of proper $c$-edge-colorings of a large random $k$-regular graph.
\end{abstract}

\maketitle

\section{Introduction}

In this paper we study the asymptotic number of edge-colored regular graphs with a prescribed set of possible vertex-incidence structures. The asymptotic enumeration of regular graphs is a topic with a long history, see for instance~\cite{bender1978asymptotic,bollobas1982asymptotic,mckay1991asymptotic}, and more recently~\cite{liebenau2024asymptotic}.
Since Lov{\'a}sz and co-authors introduced graphons~\cite{lovasz2012large}, large deviation principles have given rise to asymptotic enumeration results, see, e.g.,~\cite[\S 6]{ChatterjeeLargeDeviations}. This framework also naturally allows for edge-colors~\cite[\S 23.2]{lovasz2012large}.
Our setup differs from most of the previously mentioned works in at least three respects. First, we impose precise constraints on the allowed vertex-incidence structures. In particular, this enables us to derive asymptotic counts for proper edge-colorings in Section~\ref{sec:average}. 
Second, we allow graphs to have self-loops and multiple edges. Sometimes such graphs are also called multigraphs in the literature.
Third, we weight graphs by the inverse cardinality of their automorphism group, which is equivalent to counting half-edge labeled graphs. Such weights arise naturally, for instance, in the context of Poly{\'a} enumeration theory~\cite{polya1937kombinatorische}, the theory of combinatorial species~\cite{MR633783}, 
Feynman diagram expansions in quantum field theory~\cite{MR603127}, geometric group theory~\cite{MR830040}, and enumerative geometry~\cite{MR1171758}. Further, such weights appear as compensation factors in classical random graphs studies (see, e.g.,~\cite{MR1220220}).

Beyond these structural differences, the novelty of our approach lies in the application of analytic combinatorial techniques to regular graph counting. The analytic combinatorics framework, popularized by Flajolet and Sedgewick~\cite{MR2483235}, has been remarkably successful in providing precise asymptotic estimates for various discrete structures, most notably in the analysis of large-scale data structures and the distribution of patterns in random mappings. In this paper, we demonstrate that the asymptotic enumeration of edge-colored regular graphs is equivalent to the analytic problem of locating the critical points of a specific polynomial. 

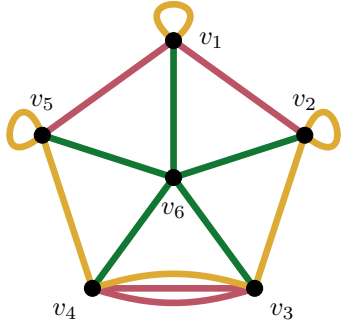
\begin{figure}[ht]
    \centering
    \begin{tikzpicture}[
  x=1ex,y=1ex,
  baseline={([yshift=-1.5ex]current bounding box.center)}
]

\pgfmathsetmacro{\R}{12} 

\coordinate (O)  at (0,0);

\coordinate (v1) at ({\R*cos(90)},{\R*sin(90)});   
\coordinate (v2) at ({\R*cos(18)},{\R*sin(18)});   
\coordinate (v3) at ({\R*cos(306)},{\R*sin(306)}); 
\coordinate (v4) at ({\R*cos(234)},{\R*sin(234)}); 
\coordinate (v5) at ({\R*cos(162)},{\R*sin(162)}); 

\draw[cb-red,  line width=2.5] (v1) -- (v2);
\draw[cb-yellow, line width=2.5] (v2) -- (v3);
\draw[cb-red,  line width=2.5] (v3) -- (v4);
\draw[cb-yellow, line width=2.5] (v4) -- (v5);
\draw[cb-red,  line width=2.5] (v5) -- (v1);

\draw[cb-green, line width=2.5] (O) -- (v1);
\draw[cb-green, line width=2.5] (O) -- (v2);
\draw[cb-green, line width=2.5] (O) -- (v3);
\draw[cb-green, line width=2.5] (O) -- (v4);
\draw[cb-green, line width=2.5] (O) -- (v5);

\path[-, cb-yellow, line width=2.5, distance=1cm] (v1) edge[in=40,out=140,above] (v1);
\path[-, cb-yellow, line width=2.5, distance=1cm] (v2) edge[in=-40,out=70,above] (v2);
\path[-, cb-yellow, line width=2.5, distance=1cm] (v5) edge[in=220,out=110,above] (v5);

\draw[cb-red,  line width=2.5, bend left=18]  (v3) to (v4);
\draw[cb-yellow, line width=2.5, bend right=18] (v3) to (v4);

\filldraw (O)  circle (3pt);
\filldraw (v1) circle (3pt);
\filldraw (v2) circle (3pt);
\filldraw (v3) circle (3pt);
\filldraw (v4) circle (3pt);
\filldraw (v5) circle (3pt);

\node[below=6pt] at (O) {$v_6$};
\node[right=6pt] at (v1) {$v_1$};
\node[above=6pt] at (v2) {$v_2$};
\node[below right=3pt] at (v3) {$v_3$};
\node[below left=3pt] at (v4) {$v_4$};
\node[above=6pt] at (v5) {$v_5$};

\end{tikzpicture}
    \caption{An edge-colored $5$-regular graph, element of $\mathcal{G}_5^3(6)$.}
    \label{fig:graph_intro}
\end{figure}
Let $\mathcal G_k^c(n)$ be the set of isomorphism classes of $k$-regular graphs with $n$ vertices, whose edges can have $c$ different colors. We allow graphs to have multiple edges and self-loops. Each vertex $v$ of such a graph has a multidegree $\deg(v) = (w_1,\ldots,w_c) \in \Z_{\geq 0}^c$, where $w_i$ denotes the number of edges of color $i$ incident to $v$. A self-loop contributes two incidences at a single vertex. The regularity assumption implies that $\sum_{i=1}^c w_i = k$. Let $\mathcal C_k^c$ denote the set of integer compositions of $k$ into $c$ (possibly zero) parts. Each  element of $\mathcal C_k^c$ can occur as the multidegree of a vertex. 
Let $\{\Lambda_{\bb w}\}_{\bb w \in \mathcal C_k^c}$ be a family of parameters indexed by these compositions $\bb w = (w_1,\ldots,w_c) \in \mathcal C_k^c$.
The $\Lambda_{\bb{w}}$ serve as vertex markings.
Figure~\ref{fig:graph_intro} shows a $(k\!=\!5)$-regular graph with six vertices and $c=3$ edge colors: green, red, yellow. With respect to this ordering, the vertex multidegrees are  $(1,2,2)$ for $v_1, v_3, v_4$, $(1,1,3)$ for $v_2, v_5$, and $(5,0,0)$ for $v_6$.\par

The main result of this article, Theorem~\ref{thm:main}, is an asymptotic formula for the weighted sum, 
\begin{align}
\label{eq:AnDef}
A(n) =
\sum_{G \in \mathcal G_k^c(n)}
\frac{1}
{|\Aut(G)|}\prod_{v\in V_G} \Lambda_{\deg(v)}\, ,
\end{align}
where $\Aut(G)$ is the group of automorphisms of a labeled graph $\Gamma$ representing $G$ as defined below. 
Throughout, we use the multi-index notation: bold symbols typically denote $c$-tuples, as in $\bb{w} = (w_1,\dots,w_c)$. The multi-index factorial is $\bb{w}! = w_1!\cdots w_c!$, while $\bb{x}^{\bb{w}}$ is shorthand notation for $x_1^{w_1} \cdots x_c^{w_c}$. Moreover, we use $|\bb{w}| = w_1+\dots +w_c$. We define a polynomial $g\in \R[x_1,\ldots,x_c]$ storing the parameters $\Lambda_{\bb w}$ by
\begin{equation}\label{eq:poly_g}
g(x_1,\ldots,x_c) = 
-\sum_{i=1}^c \frac{x_i^2}{2} +
\sum_{\bb w\in\mathcal C_k^c
}
\Lambda_{\bb w}
\frac{\bb x^{\bb w}}{\bb w!}\, .
\end{equation}
Let $\crit^*(g)$ be the set of critical points of $g$ lying in the complex cone over the sphere $S^{c-1}$,
\[
\crit^*(g) = \left\{ \bb{z} \in \C^c \,:\, \frac{\partial g}{\partial x_1}(\bb{z}) =\ldots = \frac{\partial g}{\partial x_c}(\bb{z}) = 0 \text{ and } \bb{z} = \tau \cdot \bb x \text{ for } \tau \in \C\setminus\{0\} \text{ and } \bb x \in S^{c-1} \right\} .
\]
Among these, let 
\[
  \Psi = \argmin_{\bb z= \tau \cdot \bb x \in \crit^*(g)} |\tau|
\] 
be the subset of critical points of minimal distance to the origin.
Moreover, let $\Hess g$ be the Hessian of $g$ which is the symmetric matrix formed by its second partial derivatives.

\begin{theorem}
  \label{thm:main}
  If all critical points $\bb{z}\in \Psi$ are non-degenerate, i.e.\ $\det (\Hess g(\bb{z}))\neq 0$, then
  \begin{equation}
    \label{eq:main_asymptotic}
    A(n) \sim \frac{1}{2\pi} (\ell-1)! \sum_{\bb{z}\in\Psi} \frac{(-g(\bb{z}))^{-\ell}}{\sqrt{(-1)^{c-1}\det (\Hess g(\bb{z}))}} 
\quad \text{ for large integers } \ell = n\left(\frac{k}{2}-1\right).
  \end{equation}
\end{theorem}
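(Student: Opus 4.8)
The plan is to recognize $A(n)$ as a coefficient in the perturbative expansion of a formal Gaussian integral, and then to read off its large-order growth from the resurgent structure of that integral. Throughout assume $k\ge 3$, the only range in which the statement is not vacuous. Write $g=-\tfrac12\sum_i x_i^2+P$ with $P(\bb x)=\sum_{\bb w\in\mathcal C_k^c}\Lambda_{\bb w}\bb x^{\bb w}/\bb w!$ homogeneous of degree $k$, so that $g(\bb 0)=0$ and $\Hess g(\bb 0)=-\mathrm{Id}$. Let $\bb Y$ be a standard Gaussian vector in $\R^c$, and set
\[
  Z(\hbar):=\frac{1}{(2\pi\hbar)^{c/2}}\int_{\R^c}e^{g(\bb x)/\hbar}\,\d\bb x .
\]
First I would expand $e^{P(\bb x)/\hbar}$ and integrate term by term, rescaling $\bb x=\sqrt{\hbar}\,\bb y$, to obtain as a formal power series $Z(\hbar)=\sum_{n\ge 0}\hbar^{\,n(k/2-1)}\tfrac1{n!}\mathbb{E}\bigl[P(\bb Y)^n\bigr]$. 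By Wick's theorem, $\tfrac1{n!}\mathbb{E}[P(\bb Y)^n]=A(n)$ as in \eqref{eq:AnDef}: the $n$ factors of $P$ are the vertices, Wick contractions are the edges, the exponential-generating-function conventions $1/n!$ and $1/\bb w!$ together with orbit--stabilizer produce the weight $1/|\Aut(G)|$, and $\mathbb{E}[Y_iY_j]=\delta_{ij}$ forces a color-$i$ half-edge to pair only with another color-$i$ half-edge. Hence $A(n)=[\hbar^{\ell}]Z(\hbar)$ with $\ell=n(k/2-1)$; since $k\ge 3$ this exponent determines $n$, and it records the excess $|E_G|-|V_G|$.

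Next I would use that $Z(\hbar)$ is the asymptotic expansion, at the non-degenerate maximum $\bb 0$ of $g$, of the integral above, so its coefficients diverge factorially at a rate governed by the remaining critical points of $g$. Exploiting that $g$ is a quadratic form plus the degree-$k$ form $P$, I would deform $\R^c$ inside $\C^c$ through a family of contours that respect the cone structure -- concretely, rescaling by complex phases, for instance replacing $\R^c$ by $e^{i\theta}\R^c$ with $|\theta|<\pi/4$, so that the quadratic part keeps the integrand Gaussian-decaying while the phase renders the degree-$k$ part subdominant away from the cone $\{P=0\}$, after which the integral converges and differs from the formal series only by exponentially small terms. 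A suitably chosen such family sweeps across exactly the critical points lying in the complex cone over $S^{c-1}$, that is, the points of $\crit^*(g)$, each counted once; that both the contour family and $\crit^*(g)$ are cone-shaped is what pins the relevant intersection multiplicities to $+1$, and this is the crux of the argument. By Cauchy's theorem $Z(\hbar)$ then equals the (exponentially small) deformed integral plus, for each $\bb z=\tau\cdot\bb x\in\crit^*(g)$ crossed, the contribution of a local non-degenerate saddle: a prefactor $e^{g(\bb z)/\hbar}\bigl((-1)^c\det\Hess g(\bb z)\bigr)^{-1/2}$ times an asymptotic series in $\hbar$. A short Lagrange-multiplier and Euler-relation computation gives $-g(\bb z)=\tau^2\,\tfrac{k-2}{2k}$ for every $\bb z=\tau\cdot\bb x\in\crit^*(g)$, so minimizing $|\tau|$ is the same as minimizing $|g(\bb z)|$, and $\Psi$ is exactly the set of dominant saddles.

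Finally I would extract $[\hbar^{\ell}]$ from this transseries. Only the saddles $\bb z\in\Psi$ contribute to leading order; for each of them the hypothesis $\det\Hess g(\bb z)\ne 0$ makes the local series genuine, and a Hankel-contour (Gamma-function) integral -- equivalently the Flajolet--Odlyzko transfer theorem applied to the Borel transform of $Z$, whose dominant singularities lie at $\xi=-g(\bb z)$ -- converts the prefactor $e^{g(\bb z)/\hbar}\bigl((-1)^c\det\Hess g(\bb z)\bigr)^{-1/2}$ into $\tfrac{(\ell-1)!}{2\pi i}\,(-g(\bb z))^{-\ell}\bigl((-1)^c\det\Hess g(\bb z)\bigr)^{-1/2}$. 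Summing over $\Psi$ and using $\tfrac{1}{2\pi i}\bigl((-1)^c\det\Hess g(\bb z)\bigr)^{-1/2}=\tfrac{1}{2\pi}\bigl((-1)^{c-1}\det\Hess g(\bb z)\bigr)^{-1/2}$ for compatible choices of branch yields \eqref{eq:main_asymptotic}. The discarded pieces -- the exponentially small deformed integral, the subdominant saddles in $\crit^*(g)\setminus\Psi$, and the higher-order terms of each local series -- are smaller by a fixed exponential factor or by a factor $O(1/\ell)$, using that $\crit(g)$ is finite and that the dominant saddles are non-degenerate.

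The hard part is the contour analysis of the second step: making the deformation in $\C^c$ rigorous -- in particular controlling $g$ near the hypersurface $\{P=0\}$, where the radial saddle of $r\mapsto g(r\bb x)$ escapes to infinity -- and proving that each dominant critical point is swept with multiplicity exactly $+1$, which is what removes integer intersection numbers from \eqref{eq:main_asymptotic} and is where restricting to $\crit^*(g)$ is essential. For the contributions of real critical points one could instead work directly with the convergent integral $A(n)=\tfrac1{n!}\mathbb{E}[P(\bb Y)^n]$ by Laplace's method, factoring $P(\bb Y)=|\bb Y|^k\,P(\bb Y/|\bb Y|)$ into a $\chi^2$-moment and an integral over $S^{c-1}$; but the genuinely complex points of $\Psi$ -- which occur, for example, when $k$ is even and the extreme value of $P$ on $S^{c-1}$ is a negative minimum -- are visible only through the resurgent picture.
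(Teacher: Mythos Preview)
Your final paragraph mentions and then dismisses exactly the route the paper takes: factor $P(\bb Y)=|\bb Y|^k\,P(\bb Y/|\bb Y|)$, integrate out the radial part as a gamma function, and apply the real Laplace method to $\int_{S^{c-1}}V(\bb x)^n\,\omega_{S^{c-1}}$ at the set $\Phi$ of maxima of $|V|$ on $S^{c-1}$. You reject this because you believe ``the genuinely complex points of $\Psi$ are visible only through the resurgent picture.'' That belief is the gap.

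Every $\bb z\in\Psi$ is by definition $\tau\bb x$ with $\bb x\in S^{c-1}$ \emph{real}; the critical equations then force $\tau^{k-2}=1/(kV(\bb x))$ and show that $\bb x$ satisfies the Lagrange conditions on the sphere, so in fact $\bb x\in\Phi$. Over each antipodal class in $\Phi/\!\sim$ sit exactly the $k-2$ roots $\tau$, giving $k-2$ points of $\Psi$. For any of them $-g(\bb z)=\tau^2(k-2)/(2k)$, hence
\[
(-g(\bb z))^{-\ell}
=\Bigl(\tfrac{k-2}{2k}\Bigr)^{-\ell}\tau^{-2\ell}
=\Bigl(\tfrac{k-2}{2k}\Bigr)^{-\ell}(\tau^{k-2})^{-n}
=\Bigl(\tfrac{k-2}{2k}\Bigr)^{-\ell}\bigl(kV(\bb x)\bigr)^{n},
\]
and likewise $\Hess g(\bb z)=-\mathrm{Id}+\tau^{k-2}\Hess V(\bb x)$; both depend only on $\bb x$, not on which root $\tau$ is chosen. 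Thus the sum over $\Psi$ in \eqref{eq:main_asymptotic} collapses, fiber by fiber, to $(k-2)$ times a sum over $\Phi/\!\sim$, and the complex points of $\Psi$ encode nothing beyond what the real Laplace asymptotics on $S^{c-1}$ already produced. What remains is a linear-algebra identity converting the spherical Hessian of $\log V$ into $\det\Hess g(\bb z)$ (the paper finds a factor $k^{c-1}/(k-2)$, using that $\bb z$ is an eigenvector of $\Hess g(\bb z)$ with eigenvalue $k-2$) and then collecting constants. Your resurgence route is not wrong in principle, but the step you yourself flag as ``the crux'' and ``the hard part''---arranging a contour family in $\C^c$ that sweeps precisely the points of $\crit^*(g)$, each with Stokes multiplier $+1$, and no other critical points of $g$---is left as an assertion and is entirely avoidable here.
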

This result generalizes \cite[Theorem 5.3]{BMW24:Bicolored}, which treats the bicolored case $c=2$. The key ingredient in the proof of Theorem~\ref{thm:main} is an expression of $A(n)$ as a multivariate exponential integral (Lemma~\ref{lem:An_exp_integral}) which is then amenable to a saddle point analysis in the spirit of analytic combinatorics (Proposition~\ref{prop:asymptotics_critical_V}).

Let 
${E}^{c}_k(n)$ be the average number of proper $c$-edge-colorings of all $k$-regular vertex-labeled graphs with multiple edges but no self-loops and $n$ vertices, or equivalently the expected number of proper $c$-edge-colorings of random $k$-regular $n$-vertex labeled multigraphs without self-loops. Here, random means with respect to the \emph{uniform} distribution on the set of such vertex-labeled graphs.
As an example  application of Theorem~\ref{thm:main}, we combine it with a classical result on the asymptotic number of regular (multi)graphs~\cite{bender1978asymptotic} to obtain an expression for ${E}^{c}_k(n)$ when $n$ is large while $k$ and $c$ remain fixed.

\begin{theorem}
  \label{thm:multigraphs}
  Let $k\geq 3$ and assume $\ell = n\left(\tfrac{k}{2}-1\right) \in \Z_{>0}$. Then, for large $n\in \Z_{>0}$, we have
  \[
    {E}^{c}_k(n) \sim \left\{ \begin{array}{ll}
      2^{(k-1)/2} \left(\frac{k!}{\sqrt{k}^k}\right)^{n} \exp\left( -\frac{1}{4}(k^2 -4k +3) \right)
& \text{ if } c=k, \text{ and } n \text{ even},\\
        \left( \frac{k-1}{c-1} + 1 \right)^{(1-c)/2} \left( k! \binom{c}{k} \right)^n c^{-\frac{nk}{2}}  
\exp\left( -\frac{1}{4}(k^2 -4k +3) \right)
& \text{ if } c > k,\\
      0 & \text{ else.}
    \end{array}\right.
    \]
\end{theorem}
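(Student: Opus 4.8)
The plan is to write $E^c_k(n)$ as a ratio and evaluate the two factors. A proper $c$-edge-coloring of a $k$-regular graph forces every vertex multidegree to lie in $\{0,1\}^c$ — a self-loop, or two parallel edges of the same color, would produce a coordinate $\ge 2$ — and conversely any $c$-edge-colored $k$-regular graph all of whose multidegrees lie in $\{0,1\}^c$ is loopless and properly colored. So, choosing $\Lambda_{\bb w}=1$ for $\bb w\in\{0,1\}^c$ and $\Lambda_{\bb w}=0$ otherwise, the sum $A(n)$ of~\eqref{eq:AnDef} equals $\sum_G 1/|\Aut(G)|$ over isomorphism classes $G$ of properly $c$-edge-colored $k$-regular loopless multigraphs on $n$ vertices. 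The $0$--$1$ condition makes each such $G$ rigid at the level of half-edges (between any two vertices there is at most one edge of each color and no loop, so every vertex-automorphism lifts uniquely), whence the $\SG_n$-action on the corresponding \emph{vertex-labeled} structures has stabilizers $\Aut(G)$; the number of pairs $(G,\chi)$ with $G$ a vertex-labeled $k$-regular loopless multigraph on $n$ vertices and $\chi$ a proper $c$-edge-coloring therefore equals $n!\,A(n)$, and dividing by the number $N(n)$ of vertex-labeled $k$-regular loopless multigraphs gives $E^c_k(n)=n!\,A(n)/N(n)$. For $N(n)$ I would invoke the classical configuration-model asymptotics following~\cite{bender1978asymptotic}, using that the numbers of loops and of repeated edges in a uniformly random configuration converge to independent Poisson variables of means $\tfrac{k-1}{2}$ and $\tfrac{(k-1)^2}{4}$; this gives $N(n)\sim\frac{(kn-1)!!}{(k!)^n}\exp\!\bigl(\tfrac14(k^2-4k+3)\bigr)$, since $\tfrac{(k-1)^2}{4}-\tfrac{k-1}{2}=\tfrac14(k^2-4k+3)$, and already produces the universal factor $\exp(-\tfrac14(k^2-4k+3))$ in the statement.

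With these markings~\eqref{eq:poly_g} becomes $g=-\tfrac12\sum_i x_i^2+e_k(x_1,\dots,x_c)$ with $e_k$ the $k$-th elementary symmetric polynomial; if $c<k$ this $e_k$ vanishes on $\mathcal C_k^c$, so $A(n)=0$ and $E^c_k(n)=0$. Assume $c\ge k$. The homogeneity of $e_k$ is what makes everything explicit: at a critical point $\bb z=\tau\bb x$ with $\bb x\in S^{c-1}$, Euler's identity gives $\sum_i z_i^2=k\,e_k(\bb z)$, hence $-g(\bb z)=\tfrac{k-2}{2}e_k(\bb z)=\tfrac{k-2}{2k}\tau^2$, while $\nabla g(\bb z)=0$ rescales to $\nabla e_k(\bb x)=\tau^{2-k}\bb x$ with $\tau^{k-2}=\bigl(k\,e_k(\bb x)\bigr)^{-1}$. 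Thus $\bb x$ is a Lagrange critical point of $e_k|_{S^{c-1}}$, and minimizing $|\tau|$ over $\crit^*(g)$ is the same as maximizing $|e_k(\bb x)|$ over those critical points — equivalently over all of $S^{c-1}$, the maximum being attained. Since $|e_k(\bb x)|\le e_k(|x_1|,\dots,|x_c|)\le\binom ck c^{-k/2}$ by Maclaurin's inequality and Cauchy--Schwarz, with equality forcing $|\bb x|=c^{-1/2}(1,\dots,1)$, a short parity argument over sign patterns shows that for $c>k$ the only maximizers are $\bb x=\pm c^{-1/2}(1,\dots,1)$, while for $c=k$ every $c^{-1/2}\bb\epsilon$, $\bb\epsilon\in\{\pm1\}^c$, is a maximizer. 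Hence $\Psi$ is: for $c>k$, the $k-2$ symmetric points $\bb z=\tau c^{-1/2}(1,\dots,1)$ with $\tau^{k-2}=c^{k/2}/(k\binom ck)$; for $c=k$, all $2^{k-1}(k-2)$ nonzero critical points of $g$, namely the vectors $\zeta\bb\epsilon$ with $\bb\epsilon\in\{\pm1\}^k$ and $\zeta^{k-2}=\prod_j\epsilon_j$ (identified under $(\bb\epsilon,\zeta)\mapsto(-\bb\epsilon,-\zeta)$). A direct computation shows that at each $\bb z\in\Psi$ the matrix $\Hess g(\bb z)$ is a rank-one modification of a scalar matrix, with $\det\Hess g(\bb z)=(k-2)\bigl(-\tfrac{c+k-2}{c-1}\bigr)^{c-1}$ for $c>k$ and $\det\Hess g(\bb z)=(k-2)(-2)^{c-1}$ for $c=k$; both are nonzero since $k\ge3$, so Theorem~\ref{thm:main} applies.

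It then remains to insert everything into~\eqref{eq:main_asymptotic} and simplify. For $c>k$ the $k-2$ values of $\tau^2$ are the $(k-2)$-th roots of a fixed positive number, so — because $2\ell=n(k-2)$ — the factor $(\tau^2)^{-\ell}$ is the same for all $\bb z\in\Psi$ and the sum is $(k-2)$ times a single term. For $c=k$ one gets $\bigl(-g(\bb z)\bigr)^{-\ell}=\bigl(\tfrac{k-2}{2}\bigr)^{-\ell}\bigl(\prod_j\epsilon_j\bigr)^{n}$ and $\sum_{\bb z\in\Psi}\bigl(\prod_j\epsilon_j\bigr)^{n}=\tfrac{k-2}{2}\sum_{\bb\epsilon\in\{\pm1\}^k}\bigl(\prod_j\epsilon_j\bigr)^{n}=\tfrac{k-2}{2}(1+(-1)^n)^k$, which is $0$ for $n$ odd and $2^{k-1}(k-2)$ times a single term for $n$ even; the vanishing for odd $n$ is in fact exact, a $k$-regular graph on an odd number of vertices having no perfect matching and hence no proper $k$-edge-coloring. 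In each nonvanishing case $A(n)$ is a product of $(\ell-1)!$, elementary powers, and a constant, so $E^c_k(n)=n!\,A(n)/N(n)$ becomes a product of $n!$, $(\ell-1)!$, $(kn-1)!!$ and powers; applying Stirling with $\ell=n(\tfrac k2-1)$, the super-polynomial factors and the powers of $n$, $k$, $c$ telescope and the polynomial prefactor tends to $1$, leaving exactly the three stated asymptotics.

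I expect the main obstacle to be the precise determination of $\Psi$ — in particular the claim that for $c>k$ the symmetric critical points are the \emph{only} points of minimal $|\tau|$, which by the reduction above amounts to showing that $\pm c^{-1/2}(1,\dots,1)$ are the unique maximizers of $|e_k|$ on $S^{c-1}$. The inequality chain fixes the norm of any maximizer, but excluding the competitors (mixed-sign patterns, and critical points of $g$ with vanishing coordinates or with coordinates taking two or three distinct values) is where care is needed. Controlling the configuration-model correction in $N(n)$ is a second, more routine, technical point.
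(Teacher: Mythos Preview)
Your proposal is correct and follows essentially the same route as the paper: you identify $V=e_k$, reduce the determination of $\Psi$ to maximizing $|e_k|$ on $S^{c-1}$ (you use Maclaurin plus Cauchy--Schwarz where the paper uses Schur-concavity, but these are interchangeable here), split into the $c=k$ and $c>k$ sign-pattern cases, compute the same Hessian determinants, apply Theorem~\ref{thm:main}, and divide by Bender's asymptotic for $N(n)$. The concern you flag at the end---uniqueness of the maximizers for $c>k$---is exactly what the paper handles by the parity observation that a mixed sign pattern cannot make all monomials of $e_k$ agree in sign once $c>k$; your ``short parity argument'' is the right idea and nothing deeper is needed.
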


\medskip
\noindent\textbf{Outline.}
The paper is structured as follows. In Section~\ref{sec:edge-colored_graphs}, we give a precise definition of the graphs considered and derive a counting formula with respect to the vertex multidegrees in Proposition~\ref{prop:gf_graphs}. Section~\ref{sec:asymptotic} is devoted to the proof of the asymptotic behavior of the (weighted) number of such edge-colored graphs, culminating in the proof of Theorem~\ref{thm:main}. Finally, Section~\ref{sec:average} discusses an application of our theorem to count the average
number of proper edge-colorings and proves Theorem~\ref{thm:multigraphs}.

\section*{Acknowledgments}
We thank Steve Melczer for valuable explanations of analytic combinatorics techniques.
CM is supported by Dr. Max R\"ossler, the Walter Haefner Foundation, and the ETH Z\"urich Foundation.
Research at Perimeter Institute is supported in part by the Government of Canada through the Department of Innovation, Science and Economic Development and by the Province of Ontario through the Ministry of Colleges and Universities. 

\section{Edge-colored half-edge labeled graphs}\label{sec:edge-colored_graphs}

In this section, we explain the framework of half-edge labeled graphs. For the case of bicolored graphs, this has appeared in~\cite[\S 3]{BMW24:Bicolored}, the multicolored case has been described in~\cite[\S 2]{LeeYang}.  We restate it here to make the paper self-contained.
Given a finite set $H$ of labels, an $H$-\emph{half-edge labeled graph} is a pair $(V,E)$ where $V$ and $E$ are set-partitions of $H$ with the constraint that $E$ has only blocks of size two. Interpreting each block of $V$ as a vertex gives rise to a graph that can have multiple edges and self-loops. 

\begin{remark}
In classical graph theory, a (simple) graph is defined as a set of vertices $V$ and a set of two-element subsets $E \subset V$, which inherently makes it a \emph{simplicial complex}. However, applications in topology and physics often require a more flexible definition that allows the graph to be a general finite, at most one-dimensional
CW complex (see, e.g.,~\cite{MR603127,MR4896115}). Half-edge labeled graphs are natural combinatorial realizations of these more general complexes.
\end{remark}

Explicitly, in the edge-colored setting, we encode our half-edge labeled graphs as follows.

\begin{definition}
  \label{def:multicolored_half_edge_label}
  Let $H_1,\dots,H_c$ be disjoint sets of half-edge labels, one for each color. An edge-colored $(H_1,\dots,H_c)$-\emph{half-edge labeled graph} $\Gamma$ is a tuple $\Gamma = (V, E_{1},\dots, E_{c})$ such that 
  \begin{enumerate}
    \item $V$, the set of vertices, is a set partition of $H_1 \sqcup \dots \sqcup H_c$;
    \item for $i\in\{1,\dots,c\}$, $E_{i}$ is a set partition of $H_i$ into blocks of size two.
  \end{enumerate}
\end{definition}

The notion of multidegree carries over to edge-colored half-edge labeled graphs: 
for $\Gamma = (V, E_{1},\dots, E_{c})$, $\deg(v) = (w_1,\dots,w_c)$ if the vertex $v \in V$ is incident to exactly $w_i$ half-edges of color $i$, or equivalently $|v \cap H_i| = w_i$. 

An isomorphism between an edge-colored $(H_1,\dots,H_c)$- and an edge-colored $(H_1',\dots,H_c')$-half-edge labeled graph $\Gamma$ and $\Gamma'$  is a tuple of bijections $H_i \rightarrow H_i'$, preserving the partitions $\Gamma = (V, E_{1},\dots, E_{c})$ and $\Gamma' = (V', E'_{1},\dots, E'_{c})$. 
The automorphism group $\Aut(\Gamma)$ consists of all isomorphisms of a graph to itself. We denote the isomorphism class $[\Gamma]$ by $G$ and write $\Aut(G) := \Aut(\Gamma)$ for any half-edge labeled representative.

From now on, we also sometimes drop the adjective edge-colored.
Let $\{\lambda_{\bb w}\}_{\bb w \in \Z_{\geq 0}^c}$ be an infinite set of formal parameters. We use the notation $[\bb x^{\bb a}]f(\bb{x})$ to denote the coefficient of the monomial $\bb x^{\bb a}$ in the formal power series expansion of $f$.
The following proposition is an immediate generalization of Wick's theorem (see, e.g.,~\cite{MR603127}).

\begin{proposition}
  \label{prop:gf_graphs}
The following counting formula holds:
  \begin{equation}\label{eq:gf_graphs}
    \sum_{G \in \GG} \frac{\prod_{v\in V_G} \lambda_{\deg(v)}}{|\Aut(G)|} =  
\sum_{\bb{s} \in \Z_{\geq 0}^c} 
\left(\prod_{i=1}^c (2s_i-1)!! \right)
\cdot [\bb{x}^{2\bb{s}}]\exp\left( \sum_{\bb{w}\in \Z^c_{\geq 0},\, |\bb{w}| \geq 1} \lambda_{\bb{w}} \frac{\bb{x}^{\bb{w}}}{\bb{w}!} \right),
  \end{equation}
where $\GG$ is the set of isomorphism classes of edge-colored half-edge labeled graphs. 
\end{proposition}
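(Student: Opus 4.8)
The plan is to expand both sides as formal power series in the parameters $\{\lambda_{\bb w}\}$ and match coefficients of each monomial $\prod_{\bb w} \lambda_{\bb w}^{m_{\bb w}}$. A monomial with exponents $(m_{\bb w})_{\bb w}$ on the left-hand side records a weighted count of isomorphism classes of half-edge labeled graphs having exactly $m_{\bb w}$ vertices of multidegree $\bb w$ for each $\bb w$. I will first fix such a tuple $(m_{\bb w})$, set $N = \sum_{\bb w} m_{\bb w}$ (the number of vertices), and note that the total number of half-edges of color $i$ is then forced to be even, say $2s_i$, where $2s_i = \sum_{\bb w} m_{\bb w} w_i$; otherwise there are no graphs and the coefficient on both sides vanishes (the right-hand side vanishes because $\exp(\cdots)$ produces only monomials $\bb x^{\bb w}$ with $|\bb w|\geq 1$ per factor, and extracting $\bb x^{2\bb s}$ kills odd-total-degree contributions). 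This reduces the claim to a purely finite combinatorial identity for each admissible $(m_{\bb w})$.

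Next I would set up the standard half-edge labeled bookkeeping. Choose concrete label sets $H_i$ with $|H_i| = 2s_i$. An edge-colored half-edge labeled graph with the prescribed multidegree profile is the data of: (a) a set partition $V$ of $H_1\sqcup\dots\sqcup H_c$ into $N$ blocks, of which exactly $m_{\bb w}$ have color-profile $\bb w$; and (b) for each color $i$, a perfect matching $E_i$ on the $2s_i$ half-edges of color $i$. The key point is that $\sum_{G}\frac{1}{|\Aut(G)|}$ over isomorphism classes of such graphs equals $\frac{1}{\prod_i (2s_i)!}$ times the number of such labeled structures — this is the orbit-counting identity: the symmetric group $\SG_{H_1}\times\dots\times\SG_{H_c}$ of order $\prod_i(2s_i)!$ acts on the set of labeled structures, isomorphism classes are exactly the orbits, and the stabilizer of a structure $\Gamma$ is precisely $\Aut(\Gamma)$, so by the orbit–stabilizer theorem each orbit contributes $\prod_i(2s_i)!/|\Aut(\Gamma)|$ labeled structures, and summing $1/|\Aut(G)|$ over orbits telescopes. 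Then I would count the labeled structures directly: the number of perfect matchings on $2s_i$ labels is $(2s_i-1)!!$, contributing $\prod_i (2s_i-1)!!$; and the number of ways to choose the vertex partition $V$ with the given profile is the multinomial-type count of partitioning $H_1\sqcup\dots\sqcup H_c$ into unordered blocks with $m_{\bb w}$ blocks of profile $\bb w$, namely $\frac{1}{\prod_{\bb w} m_{\bb w}!}\prod_i (2s_i)! \big/ \prod_{\bb w} (\bb w!)^{m_{\bb w}}$ — i.e.\ first order the blocks ($\prod_i(2s_i)!/\prod_{\bb w}(\bb w!)^{m_{\bb w}}$ ordered assignments distributing labeled half-edges into labeled slots of the right sizes and colors) then divide by $\prod_{\bb w} m_{\bb w}!$ for the relabeling of equal-profile blocks.

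Finally I would match this against the right-hand coefficient. Writing $\exp\big(\sum_{\bb w}\lambda_{\bb w}\bb x^{\bb w}/\bb w!\big) = \sum \prod_{\bb w}\frac{1}{m_{\bb w}!}\big(\lambda_{\bb w}\bb x^{\bb w}/\bb w!\big)^{m_{\bb w}}$, the coefficient of $\prod_{\bb w}\lambda_{\bb w}^{m_{\bb w}}$ is $\prod_{\bb w}\frac{1}{m_{\bb w}!(\bb w!)^{m_{\bb w}}}\cdot \bb x^{\sum_{\bb w} m_{\bb w}\bb w} = \prod_{\bb w}\frac{1}{m_{\bb w}!(\bb w!)^{m_{\bb w}}}\,\bb x^{2\bb s}$; extracting $[\bb x^{2\bb s}]$ and multiplying by $\prod_i(2s_i-1)!!$ gives exactly $\prod_i(2s_i-1)!!\cdot\prod_{\bb w}\frac{1}{m_{\bb w}!(\bb w!)^{m_{\bb w}}}$, which upon multiplying the left-side count by $\frac{1}{\prod_i(2s_i)!}$ agrees term for term. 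Summing over all admissible $(m_{\bb w})$ (equivalently, over all $\bb s$ and then all profiles with those half-edge totals) yields the stated identity. I expect the main obstacle to be purely expository: stating the orbit–stabilizer step cleanly enough that the passage from "sum of $1/|\Aut(G)|$ over iso classes" to "labeled count divided by $\prod_i(2s_i)!$" is rigorous, and being careful that the $\exp$ in the formula correctly reproduces the $1/m_{\bb w}!$ symmetry factors for repeated vertex types — this is exactly the content of Wick's theorem, so I would cite \cite{MR603127} for the underlying combinatorial identity and present the above as the translation into the edge-colored half-edge labeled language.
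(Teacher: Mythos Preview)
Your proposal is correct and follows essentially the same route as the paper's proof: both expand the exponential to obtain the multinomial factor $\frac{(2\bb s)!}{\prod_{\bb w} m_{\bb w}!\,(\bb w!)^{m_{\bb w}}}$ counting vertex partitions of a fixed profile (your $m_{\bb w}$ is the paper's $t_{\bb u}$), multiply by $\prod_i (2s_i-1)!!$ for the matchings, and invoke orbit--stabilizer for the action of $\Sym_{2s_1}\times\cdots\times\Sym_{2s_c}$ to convert the labeled count into the $1/|\Aut(G)|$-weighted sum over isomorphism classes. The only cosmetic difference is that the paper phrases the argument as ``interpret the right-hand side combinatorially'' whereas you phrase it as ``match coefficients of $\prod_{\bb w}\lambda_{\bb w}^{m_{\bb w}}$ on both sides''; one small slip in your write-up is that the parity obstruction should read ``odd $i$-th coordinate'' rather than ``odd total degree''.
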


\begin{proof}
We interpret the right-hand side as a weighted enumeration of edge-colored graphs using the half-edge definition. Expansion of the exponential on the right-hand side and extracting the coefficient of $\bb x^{2\bb s}=x_1^{2s_1}\cdots x_c^{2s_c}$ gives
\begin{equation}\label{eq:exponential_and_block_partitions}
    [x_1^{2s_1}\cdots x_c^{2s_c}]
    \exp\left( \sum_{\bb{w}\in \Z^c_{\geq 0},\, |\bb{w}| \geq 1} \lambda_{\bb{w}} \frac{\bb{x}^{\bb{w}}}{\bb{w}!} \right)
    = 
    \sum_{\{t_{\bb{u}}\}} 
    \frac{1}{\prod_{\bb{u}} t_{\bb{u}}!\, \bb{u}!^{t_{\bb{u}}}} \prod_{\substack{\bb{u}\in \Z^c_{\geq 0}}} \lambda_{\bb{u}}^{t_{\bb{u}}},
\end{equation}
where the last sum ranges over all functions
$\bb u\mapsto t_{\bb u}\in\Z_{\ge0}$ satisfying
$\sum_{\bb u}t_{\bb u}u_i=2s_i$ for each $i=1,\dots,c$.
This condition implies that only finitely many $t_{\bb u}$ are nonzero.

Fix a tuple of sets $(H_1,\ldots,H_c)$ with $|H_i|=2s_i$ and write $H:=H_1\sqcup\cdots\sqcup H_c$.
A choice of multiplicities $\{t_{\bb u}\}$ determines a class of partitions of $H$ into blocks such that, for each $\bb u=(u_1,\dots,u_c)$, exactly $t_{\bb u}$ blocks
contain $u_i$ elements from $H_i$.
Each block represents a vertex with multidegree $\bb u$.
The number of such partitions is
\begin{equation}\label{eq:block_partitions}
\frac{(2\bb s)!}{\prod_{\bb u}t_{\bb u}!\,\bb u!^{t_{\bb u}}},
\end{equation}
which follows from the orbit--stabilizer theorem applied to the following natural action. The $c$-fold product of symmetric groups $\Sym_{2s_1} \times \dots \times \Sym_{2s_c}$ acts  on the set of partitions of $H_1 \sqcup \dots \sqcup H_c$. A partition with $t_{\bb{u}}$ blocks with $u_i$ elements from $H_i$ is stabilized by a subgroup isomorphic to $(\Sym_{u_1} \times \dots \times \Sym_{u_c})^{t_{\bb{u}}} \rtimes \Sym_{t_{\bb{u}}}$, permuting the elements inside each block and the blocks themselves.
Thus the coefficient \eqref{eq:exponential_and_block_partitions} enumerates vertex sets with
prescribed multidegrees $\bb u$ such that $\sum_{\bb u}t_{\bb u}u_i=2s_i$, weighted by the factor
$\prod_v\lambda_{\deg(v)}$.

For each color $i$, an edge of color $i$ is obtained by pairing two half-edges in
$H_i$.
The number of perfect matchings on $H_i$ is $(2s_i-1)!!=(2s_i-1) \cdot (2s_i-3) \cdots 3 \cdot 1$, and these choices are
independent for different colors.
Choosing such matchings produces a collection of colored edges; self-loops and
multiple edges are allowed, since paired half-edges may lie in the same block or in
distinct blocks.
The total number of ways to pair all half-edges into edges is therefore
\[
\prod_{i=1}^c(2s_i-1)!! \, .
\]

Combining a partition of $H$ with matchings on each $H_i$ yields a half-edge labeled
edge-colored graph which represents an isomorphism class of a graph $G$ 
with
exactly $s_i$ edges of color $i$.
For such a fixed isomorphism class, the group
$\Sym_{2s_1}\times\cdots\times\Sym_{2s_c}$
acts freely on the set of half-edge labelings by permuting half-edges within
each color.
The stabilizer of $G$ is precisely the automorphism group $\Aut(G)$.
By the orbit--stabilizer theorem, the number of half-edge labeled graphs representing $G$ is
\[
\frac{(2\bb s)!}{|\Aut(G)|}.
\]
Multiplying \eqref{eq:exponential_and_block_partitions} by the factor
\[
\frac{1}{(2\bb s)!}\prod_{i=1}^c(2s_i-1)!!
\]
has the effect of pairing half-edges into edges and then forgetting the labeling of
half-edges.
Consequently, each isomorphism class $G$ appears with weight
$\prod_{v\in V_G}\lambda_{\deg(v)}/|\Aut(G)|$.
Summing over all $\bb s\in\Z_{\ge0}^c$ proves the stated identity.
\end{proof}

\begin{example}
We illustrate Proposition~\ref{prop:gf_graphs} in the case of $c=3$ colors and graphs with exactly one edge.
An edge-colored graph with exactly one edge is of the form \scalebox{.35}{\begin{tikzpicture}[x=1ex,y=1ex,baseline={([yshift=-1.5ex]current bounding box.center)}]
    \coordinate (v);
    \coordinate [above=4 of v] (vm1);
    \coordinate [right=3 of vm1] (v11);
    \draw[black, line width = 3.2] (v11) circle(3);
    \filldraw (vm1) circle (3.8pt);
\end{tikzpicture}
    \coordinate (vm);
    \coordinate [left=4 of vm] (v0);
    \coordinate [right=4 of vm] (v1);
    \draw[black, line width = 3.2] (v0) to (v1);
    \filldraw (v0) circle (3.8pt);
    \filldraw (v1) circle (3.8pt);
\end{tikzpicture}
For a fixed color $i\in\{1,2,3\}$, the loop graph has $|\Aut(G)|=2$, and contributes 
$\lambda_{2\bb e_i}/2$,
where $\bb e_i$ is the vector with a $1$ in coordinate $i$ and zeros otherwise.
The segment graph also has $|\Aut(G)|=2$, and contributes
$\lambda_{\bb e_i}^2/2$.
Summing over the three possible colors, the contribution of all one-edge graphs to the left-hand side of~\eqref{eq:gf_graphs} equals
\[
\sum_{i=1}^3\left(
\frac{\lambda_{\bb e_i}^2}{2}
+
\frac{\lambda_{2\bb e_i}}{2}
\right).
\]
On the right-hand side of~\eqref{eq:gf_graphs}, graphs with exactly one
edge correspond to summands with
\[
\bb s\in\{(1,0,0),(0,1,0),(0,0,1)\},
\]
since $|2 \bb s| = 2$.
For $\bb s=\bb e_i$, we have
$\prod_{j=1}^3(2s_j-1)!! = (2\cdot1-1)!!=1$,
and we must compute the coefficient of $x_i^2$ in the expansion of the exponential function.
Only the terms $\lambda_{\bb e_i}x_i$ and $\lambda_{2\bb e_i}x_i^2/2$ contribute to
this coefficient, and expanding yields
\[
[x_i^2]\exp\!\left(
\sum_{\substack{\bb w\in\Z_{\ge0}^3\\|\bb w|\ge1}}
\lambda_{\bb w}\frac{\bb x^{\bb w}}{\bb w!}
\right)
=
\frac{\lambda_{\bb e_i}^2}{2}
+
\frac{\lambda_{2\bb e_i}}{2}.
\]
Summing over $i=1,2,3$ gives the contribution of one-edge graphs on the left-hand side.
\end{example}

\section{Asymptotics for regular graphs}\label{sec:asymptotic}

The goal of this section is to prove the main result, Theorem~\ref{thm:main}.
Recall that $\mathcal C_k^c$ is the set of all integer compositions of $k$ into $c$ (possibly zero) blocks, namely all $\bb w\in \Z_{\geq 0}^c$ such that $|\bb w| = k$. 
Consider the homogeneous polynomial $V \in \R[\bb x]$ of degree $k\geq 3$,
\begin{align*}
V(\bb x) = 
\sum_{ \bb w \in\mathcal C_k^c
}
\Lambda_{\bb w}
\frac{\bb x^{\bb w}}{\bb w!}\, ,
\end{align*}
which relates to the polynomial defined in~\eqref{eq:poly_g} as $g(\bb x) = -\tfrac{1}{2}\sum_i x_i^2 + V(\bb x)$. In contrast to the formal variables $\lambda_{\bb w}$, we think of the $\Lambda_{\bb w}$ as fixed real parameters.
Via Proposition~\ref{prop:gf_graphs}, the polynomial $V$ encodes a vertex-incidence structure for $k$-regular graphs: when it is used as the argument of the exponential function on the right-hand side of \eqref{eq:gf_graphs}, then the only relevant summands on the left-hand side are $k$-regular graphs.\par

Our proof strategy for Theorem~\ref{thm:main} consists of using the homogeneity of $V$ to find an exponential integral representation over the sphere of the numbers
\begin{align}
\label{eq:An}
A(n) =
\sum_{G \in \mathcal G_k^c(n)}
\frac{\prod_{v\in V_G} \Lambda_{\deg(v)}}
{|\Aut(G)|}\, ,
\end{align}
where $n$ is the number of vertices of $G$ (Lemma~\ref{lem:An_exp_integral}). To such an integral representation we can apply the saddle point method to obtain an asymptotic formula in terms of critical points of $V$ (Proposition~\ref{prop:asymptotics_critical_V}). 
Translating this formula in terms of critical points of the polynomial $g$ yields a computationally more tractable expression, which is our main result, Theorem~\ref{thm:main}. We start by providing an integral formula for $A(n)$. This result also appears as Proposition 2.6 in~\cite{LeeYang}.

\begin{lemma}
  \label{lem:An_exp_integral}
  Let $V\in \R[\bb{x}]$ be a homogeneous polynomial of degree $k$ and let us denote $m = \tfrac{n k}{2}$. Then, for any integer $n\geq 0$ such that $m\in\Z$, we have
  \begin{equation}
    \label{eq:An_exp_integral}
    A(n) = \frac{2^{m + \frac{c-2}{2}} \Gamma\left(m +\tfrac{c}{2}\right)}{(2\pi)^{\frac{c}{2}} n!} \int_{S^{c-1}} V(\bb{x})^{n} \,\omega_{S^{c-1}},
  \end{equation}
  where $\omega_{S^{c-1}}$ denotes the standard volume form of the sphere $S^{c-1}$, and $\Gamma$ is the gamma function.
\end{lemma}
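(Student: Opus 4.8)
The plan is to derive~\eqref{eq:An_exp_integral} from the exact counting formula of Proposition~\ref{prop:gf_graphs} by specializing it to the $k$-regular setting and then recognizing the resulting double-factorial-weighted coefficient sum as a Gaussian integral that, thanks to the homogeneity of $V$, collapses onto the sphere. First I would set $\lambda_{\bb w} = z\,\Lambda_{\bb w}$ for the compositions $\bb w \in \mathcal C_k^c$ and $\lambda_{\bb w} = 0$ otherwise, where $z$ is a formal variable marking the number of vertices. With this choice the argument of the exponential on the right-hand side of~\eqref{eq:gf_graphs} becomes $z\,V(\bb x)$, while on the left-hand side only $k$-regular graphs contribute, each carrying the weight $z^{|V_G|}$. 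Extracting the coefficient of $z^n$ picks out the $n$-vertex graphs, and since $V$ is homogeneous of degree $k$ we have $[z^n]\exp(zV(\bb x)) = V(\bb x)^n/n!$, which is homogeneous of degree $nk = 2m$. This gives
\[
A(n) = \frac{1}{n!} \sum_{\substack{\bb s \in \Z_{\geq 0}^c,\ |\bb s| = m}} \Bigl(\prod_{i=1}^c (2s_i-1)!!\Bigr)\, [\bb x^{2\bb s}] V(\bb x)^n ,
\]
where the constraint $|\bb s| = m$ just records that an $n$-vertex $k$-regular graph has $m = nk/2$ edges in total; in particular $A(n)=0$ when $m\notin\Z$.

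Next I would Gaussianize the double factorials using the elementary moments $\tfrac{1}{\sqrt{2\pi}}\int_{\R} x^{2s}e^{-x^2/2}\,dx = (2s-1)!!$ and $\tfrac{1}{\sqrt{2\pi}}\int_{\R} x^{2s+1}e^{-x^2/2}\,dx = 0$. Taking the product over the $c$ coordinates gives $\prod_i(2s_i-1)!! = (2\pi)^{-c/2}\int_{\R^c}\bb x^{2\bb s}e^{-|\bb x|^2/2}\,d\bb x$, and since every monomial of $V(\bb x)^n$ having an odd exponent in some variable integrates to zero against the Gaussian weight, interchanging the finite sum with the integral yields
\[
A(n) = \frac{1}{(2\pi)^{c/2}\,n!}\int_{\R^c} V(\bb x)^n\, e^{-|\bb x|^2/2}\,d\bb x .
\]
Absolute convergence is clear since the integrand is a polynomial times a Gaussian.

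Finally I would pass to polar coordinates $\bb x = r\bb u$ with $r = |\bb x|\ge 0$ and $\bb u \in S^{c-1}$, so that $d\bb x = r^{c-1}\,dr\,\omega_{S^{c-1}}$ and, by homogeneity, $V(\bb x)^n = r^{nk}V(\bb u)^n$. The integral then factors as $\bigl(\int_{S^{c-1}} V(\bb u)^n\,\omega_{S^{c-1}}\bigr)\cdot\int_0^\infty r^{nk+c-1}e^{-r^2/2}\,dr$, and the substitution $t = r^2/2$ evaluates the radial piece as $2^{m+\frac{c-2}{2}}\,\Gamma\!\bigl(m+\tfrac{c}{2}\bigr)$, using $\tfrac{nk+c}{2} = m+\tfrac{c}{2}$. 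Assembling the pieces gives exactly~\eqref{eq:An_exp_integral}.

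The only points requiring genuine care are the combinatorial bookkeeping in the first step — checking that the specialization of Proposition~\ref{prop:gf_graphs} together with the $z^n$-extraction reproduces precisely $A(n)$ as defined in~\eqref{eq:An}, including the correct handling of automorphism weights — and the observation in the second step that the odd-exponent monomials of $V^n$ drop out, so that the full $\R^c$ Gaussian integral agrees term by term with the double-factorial sum over $|\bb s|=m$. The remaining manipulations are a standard change of variables and a Gamma-integral evaluation, and I do not expect a substantive obstacle there.
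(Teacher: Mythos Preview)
Your proposal is correct and follows essentially the same route as the paper's own proof: specialize Proposition~\ref{prop:gf_graphs} to the $k$-regular case to obtain the double-factorial--weighted coefficient sum, replace the double factorials by Gaussian moments to get the $\R^c$-integral, and then use polar coordinates together with the homogeneity of $V$ to split off and evaluate the radial Gamma integral. The only cosmetic difference is that you introduce an auxiliary marking variable $z$ and extract its $z^n$-coefficient, whereas the paper argues directly by degree-counting in the $\Lambda_{\bb w}$; the two devices accomplish exactly the same bookkeeping.
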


\begin{remark}
The statement implies that, for a fixed polynomial $V$, the sequence $A(0), A(1), \ldots,$ is \emph{holonomic}~\cite{MR4621625}. Without invoking the holonomicity, a rather straightforward generalization of~\cite[\S 4]{BMW24:Bicolored} provides a (moderately efficient) algorithm to compute the values of $A(n)$. An algorithm that produces explicit $p$-recursions for sequences $A(n)$ would be useful. Techniques developed in~\cite{ChyMis25:DifferentialEquationsGeneratingFunction,BCL25:MultivariateIntegrationDModules} might provide such an algorithm.
\end{remark}

\begin{proof}
  Since the graphs $G$ contributing to the sum~\eqref{eq:An} for $A(n)$ are $k$-regular with $n$ vertices, the number of edges of $G$ is $m=\tfrac{n k}{2}$ which must be an integer. From Proposition~\ref{prop:gf_graphs}, we have 
  \begin{equation}
    \label{eq:An_series}
    A(n) = \sum_{\substack{\bb{s} \in \Z_{\geq 0} \\ |\bb{s}| = m}} \left(\prod_{i=1}^c (2s_i-1)!! \right)
\cdot [\bb{x}^{2\bb{s}}]\exp\left( \sum_{\bb w \in\mathcal C_k^c
} \Lambda_{\bb{w}} \frac{\bb{x}^{\bb{w}}}{\bb{w}!} \right).
  \end{equation}
  The $\bb{x}^{2\bb{s}}$-coefficient of the exponential on the right-hand side has degree $\tfrac{2m}{k} = n$ in the coefficients $\Lambda_{\bb{w}}$. Using the homogeneity of $V$, this implies
  \[
    [\bb{x}^{2\bb{s}}]\exp\left( \sum_{\bb w \in\mathcal C_k^c} \Lambda_{\bb{w}} \frac{\bb{x}^{\bb{w}}}{\bb{w}!} \right) = [\bb{x}^{2\bb{s}}] \frac{V(\bb{x})^{n}}{n!}.
  \]
  Recall the Gaussian integral identity
  \[
    \frac{1}{\sqrt{2\pi}} \int_{-\infty}^{\infty} e^{-\frac{x^2}{2}} x^t \d x = \left\{ \begin{array}{ll}
      (t-1)!! & \text{if } t \text{ is even,} \\
      0 & \text{if } t \text{ is odd.}
    \end{array} \right.
  \]
  This, used on each of the $c$ double factorials, enables us to rewrite \eqref{eq:An_series} as 
  \[
    A(n) = \frac{1}{(2\pi)^{c/2} n!} \int_{\R^c} \exp\left( -\frac{1}{2} \sum_{i=1}^{c} x_i^2 \right) V(\bb{x})^{n} \d\bb{x}.
  \]
  Finally, the homogeneity of $V$ allows us to switch to polar coordinates rescaling $V(r\cdot \bb{x}) = r^k V(\bb{x})$, so we can rewrite the integral above as
  \begin{align*}
    A(n) & = \frac{1}{(2\pi)^{c/2} n!} \int_{0}^{\infty} r^{c-1} \left( \int_{\partial B_r(0)} \exp\left( -\frac{1}{2} \sum_{i=1}^{c} x_i^2 \right) V(\bb{x})^{n} \,\omega_{\partial B_r(0)} \right) \d r\\
    & = \frac{1}{(2\pi)^{c/2} n!} \int_{0}^{\infty} e^{-\frac{r^2}{2}} r^{c-1 + kn} \d r \int_{S^{c-1}} V(\bb{x})^{n} \,\omega_{S^{c-1}},
  \end{align*}
  where $B_r(0)$ is the Euclidean ball of radius $r$ centered at $0$, and $\omega_{\partial B_r(0)}$ or $\omega_{S^{c-1}}$ denote the standard volume form on the sphere.
  Together with the identity
  \[
    \int_{0}^{\infty} e^{-\frac{r^2}{2}} r^{nk+c-1} \d r = \int_{0}^{\infty} e^{-q} (2q)^{\frac{nk + c-2}{2}} \d q = 2^{m + \frac{c-2}{2}} \Gamma\left(m + \tfrac{c}{2}\right)\,,
  \]
  this yields the stated expression.
\end{proof}

Lemma~\ref{lem:An_exp_integral} can be used to deduce the asymptotics of $A(n)$ by performing a saddle point analysis of the integral in \eqref{eq:An_exp_integral}. 
We use the asymptotic equivalence notation
$A(n) \sim B(n)$  to denote $\lim_{n\rightarrow \infty} A(n)/B(n) = 1$.

\begin{proposition}
  \label{prop:asymptotics_critical_V}
  Let $\Phi$ be the set of global maxima of the function $S^{c-1}\subset \R^c \rightarrow \R,~ \bb{x}\mapsto |V(\bb{x})|$, 
  and assume that all of these are non-degenerate, i.e.\ the Hessian matrix has nonzero determinant. If $m = \frac{nk}{2}\in\Z$, then, for large $n$, we have
  \begin{equation}
    \label{eq:asymptotics_sphere}
    A(n) \sim \frac{k^{m + \frac{c-1}{2}} \left(\frac{k}{2}-1\right)^{n-m}\sqrt{\frac{k}{2}-1} }{\sqrt{8}\pi}\, \Gamma\!\left(m-n \right) \sum_{\bb{x}\in\Phi} \frac{V(\bb{x})^{n}}{\sqrt{(-1)^{c-1} \det(\Hess_{S^{c-1}} f(\bb x))}}\, ,
  \end{equation}
  where $\Hess_{S^{c-1}}$ is the negative definite Hessian taken intrinsically on the sphere $S^{c-1}$, in local coordinates $\varphi_{\bb x}(\bb y)$ around $\bb x$, with $\bb y \in \R^{c-1}$ and $f_{\bb x}(\bb y) = \log \frac{V(\varphi_{\bb x}(\bb{y}))}{V(\bb{x})}$. 
\end{proposition}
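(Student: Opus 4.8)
The plan is to feed the integral formula of Lemma~\ref{lem:An_exp_integral} into a Laplace-type (saddle point) analysis of the resulting sphere integral, and then reconcile the scalar prefactors by Stirling's formula. By Lemma~\ref{lem:An_exp_integral},
\[
A(n)=\frac{2^{m+\frac{c-2}{2}}\,\Gamma\!\left(m+\tfrac{c}{2}\right)}{(2\pi)^{c/2}\,n!}\,I(n),\qquad I(n):=\int_{S^{c-1}}V(\bb x)^n\,\omega_{S^{c-1}},\quad m=\tfrac{nk}{2},
\]
so everything reduces to the asymptotics of $I(n)$. Let $M:=\max_{S^{c-1}}|V|>0$ (positive since $V\neq 0$ is homogeneous), so that $\Phi=\{\bb x\in S^{c-1}:|V(\bb x)|=M\}$. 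At each $\bb x\in\Phi$ we have $V(\bb x)\neq 0$, hence $V$ has a constant sign on some open $U_{\bb x}\ni\bb x$, and on $U_{\bb x}$ the smooth function $\bb x'\mapsto\log\bigl(V(\bb x')/V(\bb x)\bigr)=\log\bigl(|V(\bb x')|/M\bigr)$ attains the value $0$ at $\bb x$. By the non-degeneracy hypothesis this is a non-degenerate maximum; in particular $\Phi$ is discrete, hence finite by compactness. Choosing each chart $\varphi_{\bb x}$ so that $\varphi_{\bb x}^{*}\omega_{S^{c-1}}=\bigl(1+O(|\bb y|^2)\bigr)\d\bb y$ (e.g.\ geodesic normal coordinates at $\bb x$), the function $f_{\bb x}(\bb y)=\log\bigl(V(\varphi_{\bb x}(\bb y))/V(\bb x)\bigr)$ satisfies $f_{\bb x}(\bb 0)=0$, $\nabla f_{\bb x}(\bb 0)=\bb 0$, and $\Hess_{S^{c-1}}f_{\bb x}(\bb 0)$ negative definite.

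Next I would run the saddle point argument. Fix $\delta>0$ small enough that the $U_{\bb x}$ are pairwise disjoint and $|V|\le M-\delta$ on $S^{c-1}\setminus\bigcup_{\bb x\in\Phi}U_{\bb x}$; this complement contributes $O\bigl((M-\delta)^n\bigr)$ to $I(n)$, negligible against $M^n n^{-(c-1)/2}$. On $U_{\bb x}$ write $V(\bb x')^n=V(\bb x)^n e^{n f_{\bb x}(\bb y)}$ in the chart and apply the standard multivariate Laplace method: since $\varphi_{\bb x}^{*}\omega_{S^{c-1}}$ has density $1$ at $\bb y=\bb 0$,
\[
\int_{U_{\bb x}}V(\bb x')^n\,\omega_{S^{c-1}}=V(\bb x)^n\left(\frac{2\pi}{n}\right)^{\frac{c-1}{2}}\frac{1+o(1)}{\sqrt{(-1)^{c-1}\det\bigl(\Hess_{S^{c-1}}f_{\bb x}\bigr)}}.
\]
Summing over the finite set $\Phi$ and inserting the prefactor from Lemma~\ref{lem:An_exp_integral} gives
\[
A(n)\sim\frac{2^{m+\frac{c-2}{2}}\,\Gamma\!\left(m+\tfrac{c}{2}\right)}{(2\pi)^{c/2}\,n!}\left(\frac{2\pi}{n}\right)^{\frac{c-1}{2}}\sum_{\bb x\in\Phi}\frac{V(\bb x)^n}{\sqrt{(-1)^{c-1}\det\bigl(\Hess_{S^{c-1}}f_{\bb x}\bigr)}}.
\]

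It then remains to check that the scalar prefactor here is asymptotically equal to $k^{m+\frac{c-1}{2}}\bigl(\tfrac{k}{2}-1\bigr)^{n-m}\sqrt{\tfrac{k}{2}-1}\,\Gamma(m-n)/(\sqrt{8}\,\pi)$. Substituting $m=\tfrac{nk}{2}$ and $m-n=n\bigl(\tfrac{k}{2}-1\bigr)$, the three gamma-type factors $\Gamma(m+\tfrac{c}{2})$, $n!=\Gamma(n+1)$ and $\Gamma(m-n)$ all have arguments growing linearly in $n$, so I would apply Stirling's expansion $\log\Gamma(z)=z\log z-z-\tfrac12\log z+\tfrac12\log(2\pi)+o(1)$ to each. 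After the $n\log n$ terms cancel (using $m=n+(m-n)$), the leading exponential behaviour is governed by the elementary identity $m\log m-n\log n-(m-n)\log(m-n)=m\log\tfrac{k}{2}-(m-n)\log\bigl(\tfrac{k}{2}-1\bigr)$, and the remaining half-integer powers of $n$, $k$, $\tfrac{k}{2}-1$ and $2\pi$ together with the explicit constants collapse to $1$. This is a routine computation.

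The only genuinely delicate point is the Laplace step on the curved compact manifold $S^{c-1}$: one must deal with the possibly indefinite sign of $V$ by localizing in the sign-constant neighborhoods $U_{\bb x}$, and must normalize the charts $\varphi_{\bb x}$ so that the Riemannian volume form introduces no extra Jacobian, so that the Hessian determinant appearing in the answer is exactly the intrinsic one on $S^{c-1}$. (Should $\sum_{\bb x\in\Phi}V(\bb x)^n/\sqrt{\cdots}$ vanish for infinitely many $n$, the displayed relation is to be read as an equality of leading-order coefficients; this does not occur in the applications, where $\Lambda_{\bb w}\ge 0$ forces $V>0$ somewhere on the positive orthant.) The tail bound and the prefactor bookkeeping are otherwise standard.
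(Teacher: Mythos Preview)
Your proposal is correct and follows essentially the same route as the paper: invoke Lemma~\ref{lem:An_exp_integral}, apply the multivariate Laplace method on $S^{c-1}$ around the finitely many non-degenerate maxima of $|V|$, and then massage the prefactor via Stirling's formula into the stated form. The paper's own proof is considerably terser (it simply cites de~Bruijn for the saddle point step and displays two intermediate asymptotic expressions), whereas you spell out the localization, the sign-constancy of $V$ near each $\bb x\in\Phi$, and the normalization of the charts so that the intrinsic Hessian determinant appears without an extra Jacobian factor; these are exactly the points one must be careful about, and your treatment of them is sound.
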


\begin{remark}
    Consider a function $f:S^{c-1}\subset \R^c\to \R$ and $\bb x \in S^{c-1}$ a non-degenerate maximizer for $f$. Let $\varphi_{\bb x}:\R^{c-1}\to S^{c-1}$ be a system of local coordinates, namely a smooth diffeomorphism from a neighborhood of the origin $\bb 0$ to a neighborhood of $\bb x$, so that $\varphi_{\bb x}(\bb 0) = \bb x$. Let us denote by $\bb y$ the coordinates in $\R^{c-1}$. Then, the spherical Hessian $\Hess_{S^{c-1}}f (\bb{x})$ of $f$ at $\bb{x}$ (in local coordinates $\bb{y}$) is the $(c-1)\times (c-1)$ negative definite matrix with entries $b_{ij}$ such that 
    \[
    f(\varphi_{\bb x}(\bb{y})) = \frac{1}{2} \sum_{i,j=1}^{c-1} b_{ij}\, y_i  y_j + o(y_1^2 + \dots + y_{c-1}^2),
    \]
    for $\bb y$ in a neighborhood of $\bb 0$.
\end{remark}

\begin{proof}
  From Lemma~\ref{lem:An_exp_integral} we see that for large $n$, since $S^{c-1}$ is compact, the main contribution to the integral~\eqref{eq:An_exp_integral} comes from points near the maxima of $\bb{x}\mapsto |V(\bb{x})|$ on $S^{c-1}$. The statement follows from a saddle point expansion around these maxima, see~\cite[\S 4.6]{deBruijn2014asymptotic}, and applications of Stirling's formula:
  \begin{align*}
    A(n) & \sim \frac{2^{m + \frac{c-2}{2}} \left(m +\frac{c-2}{2}\right)!}{\sqrt{2\pi} \, n! \, n^{\frac{c-1}{2}}} \sum_{\bb{x}\in\Phi} \frac{V(\bb{x})^{n}}{\sqrt{\det(-\Hess_{S^{c-1}} f(\bb{x}))}} \\
    & \sim \frac{1}{2\sqrt{\pi}}
    \, k^{\frac{c-1}{2}}
    \, n^{-\frac{1}{2}}
    k^{\frac{nk}{2}}
    \, e^{n(1-\frac{k}{2})}
    \, n^{n(\frac{k}{2}-1)} \sum_{\bb{x}\in\Phi} \frac{V(\bb{x})^{n}}{\sqrt{(-1)^{c-1} \det(\Hess_{S^{c-1}} f(\bb{x}))}}.
  \end{align*}
  We will see later that it will be convenient to use the Gamma function of $n(\frac{k}{2}-1)=m-n$ to rewrite this expression.  
This is done using again Stirling's formula, to get
  \[
  A(n) \sim \frac{k^{m + \frac{c-1}{2}} \left(\frac{k}{2}-1\right)^{n-m}\sqrt{\frac{k}{2}-1}}{\sqrt{8}\pi}\, \Gamma\!\left(m-n \right) \sum_{\bb{x}\in\Phi} \frac{V(\bb{x})^{n}}{\sqrt{(-1)^{c-1} \det(\Hess_{S^{c-1}} f(\bb x))}}.
  \]
The difference $n-m$, which appears prominently here, is the \emph{Euler characteristic} of the graph.
\end{proof}

  \begin{remark}
    In \cite{LeeYang} it is shown that if one introduces a complex parameter into the polynomial $V(\bb{x})$, $A(n)$ can be written, for large $n$, as a sum over critical points of $V$ restricted to $\mathcal{S} = \{\bb{x}\in\C^c \,:\, x_1^2+\dots+x_c^2 = 1\}$. The space $\mathcal{S}$ can be thought of as a complex compactification of the real sphere $S^{c-1}$. 
    The sum representation comes from a decomposition of the integral in Lemma~\ref{lem:An_exp_integral} into integrals over Lefschetz thimbles associated to each critical point. Therefore, Proposition~\ref{prop:asymptotics_critical_V} can be seen as a specialization of \cite[Corollary 3.4]{LeeYang} to the case where $V(\bb{x})$ is defined over $\R$, so that not all Lefschetz thimbles contribute to the integral but only those corresponding to global maxima or minima of $V$ on $\mathcal{S}_{\R} = S^{c-1}$.
  \end{remark}

The expression \eqref{eq:asymptotics_sphere} is not ideal in that one needs to compute the Hessian matrices in local coordinates. We remedy this issue by providing a different asymptotic expression in terms of the Hessian of $g$ on $\R^c$, which is straightforward to compute. The following lemmata prepare this.

Recall that we denote by $\crit^*(g)$ the critical points of $g$ in the complex cone over $S^{c-1}$,
\[
\crit^*(g) = \left\{ \bb{z} \in \C^c \,:\, \frac{\partial g}{\partial x_1}(\bb{z}) =\ldots = \frac{\partial g}{\partial x_c}(\bb{z}) = 0 \text{ and } \bb{z} = \tau \cdot \bb x \text{ for } \tau \in \C\setminus\{0\} \text{ and } \bb x \in S^{c-1} \right\} ,
\]
and 
\[
  \Psi = \argmin_{\bb z= \tau \cdot \bb x \in \crit^*(g)} |\tau|.
\] 

\begin{lemma}
  \label{lem:critical_point_identification}
  Consider the polynomial $g$ as defined in \eqref{eq:poly_g} and let $\Psi$ be the set of critical points among $\crit^*(g)$ with minimal distance to the origin as defined above. Let $\sim$ be the equivalence relation on $S^{c-1}$ identifying antipodal points. Then, the map 
  \[
    \nu \,:\, \Psi \rightarrow \Phi / \sim, \quad \bb{z} = \tau\cdot \bb{x} \mapsto \bb{x}
  \]
  is onto and every fiber has cardinality $k-2$.
\end{lemma}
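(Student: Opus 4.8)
The plan is to exploit the homogeneity of $V$ to rewrite the defining equation of $\crit^*(g)$ as an explicit scalar equation for $\tau$ in a factorization $\bb z = \tau\cdot\bb x$, and then read off the image and the fibers of $\nu$ directly. Since $g = -\tfrac12\sum_i x_i^2 + V$, a point $\bb z$ is critical for $g$ precisely when $\nabla V(\bb z) = \bb z$. Writing $\bb z = \tau\cdot\bb x$ with $\tau\in\C\setminus\{0\}$ and $\bb x\in S^{c-1}$, and using that $\nabla V$ is homogeneous of degree $k-1$, this becomes $\tau^{k-2}\,\nabla V(\bb x) = \bb x$. Here $\bb x$ and $\nabla V(\bb x)$ are \emph{real} vectors with $\bb x\neq\bb 0$, so $\nabla V(\bb x)\neq\bb 0$ (otherwise $\bb x=\bb 0$) and, taking imaginary parts, $\tau^{k-2}\in\R\setminus\{0\}$. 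Hence $\nabla V(\bb x)$ is a real scalar multiple of $\bb x$, i.e.\ $\bb x$ is a Lagrange-critical point of $V|_{S^{c-1}}$; Euler's identity $\bb x\cdot\nabla V(\bb x) = kV(\bb x)$ pins down the multiplier as $\nabla V(\bb x) = kV(\bb x)\,\bb x$ (so $V(\bb x)\neq 0$), and the equation for $\tau$ reduces to $\tau^{k-2} = (k\,V(\bb x))^{-1}$.

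Next I would pass to absolute values: $|\tau|^{k-2} = \bigl(k\,|V(\bb x)|\bigr)^{-1}$, so minimizing $|\tau|$ over $\crit^*(g)$ is the same as maximizing $|V(\bb x)|$ over the Lagrange-critical points $\bb x$ of $V|_{S^{c-1}}$. Assuming $V\not\equiv 0$ (otherwise $\crit^*(g)=\emptyset$ and the statement must be read with the usual conventions), let $M := \max_{S^{c-1}}|V| > 0$. Each $\bb x\in\Phi$ has $V(\bb x)\neq 0$, so $|V|$ coincides with $\pm V$ near $\bb x$ and $\bb x$ is a Lagrange-critical point of $V|_{S^{c-1}}$ with $|V(\bb x)| = M$; conversely a Lagrange-critical point whose $|V|$-value equals $M$ is by definition a global maximizer of $|V|$, hence in $\Phi$. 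Therefore
\[
  \Psi = \bigl\{\, \tau\cdot\bb x \;:\; \bb x\in\Phi,\ \tau^{k-2} = (k\,V(\bb x))^{-1} \,\bigr\},
\]
and since $|V(\bb x)| = M$ for every $\bb x\in\Phi$, all $k-2$ roots $\tau$ attached to a given $\bb x$ share the minimal modulus $(kM)^{-1/(k-2)}$, so none of them is spuriously excluded from $\Psi$.

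Finally I would compute the fibers of $\nu$. Surjectivity is immediate from the displayed description of $\Psi$. Fix $[\bb x]\in\Phi/{\sim}$ with representative $\bb x\in\Phi$. An element $\bb z\in\Psi$ lies in $\nu^{-1}([\bb x])$ iff $\bb z = \tau\cdot\bb x$ with $\tau^{k-2} = (kV(\bb x))^{-1}$, \emph{or} $\bb z = \tau\cdot(-\bb x)$ with $\tau^{k-2} = (kV(-\bb x))^{-1}$. Using $V(-\bb x) = (-1)^k V(\bb x)$ together with the parity identity $(-1)^{k-2} = (-1)^k$, the second family equals $\{(-\tau)\cdot\bb x : (-\tau)^{k-2} = (kV(\bb x))^{-1}\}$, i.e.\ exactly the same set of points of $\C^c$ as the first. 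Hence $\nu^{-1}([\bb x]) = \{\sigma\cdot\bb x : \sigma^{k-2} = (kV(\bb x))^{-1}\}$, which has exactly $k-2$ elements since $\bb x\neq\bb 0$ and $k\geq 3$.

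I expect the main obstacle to be precisely the bookkeeping in this last step: one must check that the two representations of a fiber element (via $\bb x$ and via its antipode $-\bb x$, together with the $(k-2)$-th roots of unity) produce the \emph{same} $k-2$ points rather than $2(k-2)$ distinct ones — this is where the parity $(-1)^{k-2}=(-1)^k$ and the non-uniqueness $\tau\cdot\bb x = (-\tau)\cdot(-\bb x)$ of the factorization are essential. A secondary point needing care is verifying that the global $|V|$-maximum on $S^{c-1}$ is actually attained at a Lagrange-critical point with $\nabla V\neq\bb 0$, so that it contributes to $\crit^*(g)$ at all; this is where Euler's identity is used to exclude $V(\bb x)=0$ at such points.
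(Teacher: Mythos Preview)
Your proposal is correct and follows essentially the same route as the paper: reduce the criticality condition $\nabla V(\bb z)=\bb z$ via homogeneity to the scalar equation $\tau^{2-k}=kV(\bb x)$, identify $\bb x$ as a Lagrange-critical point of $V|_{S^{c-1}}$, and then observe that minimizing $|\tau|$ amounts to maximizing $|V(\bb x)|$, with $k-2$ roots $\tau$ over each $\bb x$. Your treatment of the fiber over $[\bb x]\in\Phi/{\sim}$ --- checking that the $k-2$ points obtained from $\bb x$ and from $-\bb x$ coincide via $(-1)^{k-2}=(-1)^k$ --- is in fact more careful than the paper's, which dispatches this by remarking that $\nu$ is only well-defined on the quotient and then simply counting roots.
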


\begin{proof}
    The map $\nu$ is well-defined only onto the quotient space since $\tau \bb x = (-\tau) (-\bb x)$, hence we want to identify $\bb x$ with $-\bb x$.
  Let $\tau\bb{x}\in\crit^*(g)$ be a critical point. It satisfies
  \begin{equation}
    \label{eq:critical_eqs}
    \tau x_i = \frac{\partial V}{\partial x_i}(\tau \bb{x}) \text{ for } i=1,\dots,c.
  \end{equation}
  Multiplying these by $\tau x_i$, summing over all $i$ and using homogeneity, we obtain
  \begin{equation}
    \label{eq:tau_def}
    \tau^2 = \tau^2 \sum_{i=1}^{c} x_i^2 = \sum_{i=1}^c \tau x_i \frac{\partial V}{\partial x_i}(\tau \bb{x}) = kV(\tau \bb{x}) = k \tau^k V(\bb{x}),
  \end{equation}
  so $\tau^{2-k} = kV(\bb{x})$. Moreover, $\bb{x}$ is a critical point of $\restr{V}{S^{c-1}}$. Indeed, from \eqref{eq:critical_eqs} we see that $\bb{x}$ satisfies the Lagrange equations on the sphere
  \[
    \frac{\partial V}{\partial x_i}(\bb{x}) - 2\lambda x_i = 0 \quad \text{for all } i=1,\dots,c
  \]
  with Lagrange multiplier $\lambda = \tfrac{\tau^{2-k}}{2}$.
  Since $k\geq 3$, any $\tau$ satisfying \eqref{eq:tau_def} with minimal $|\tau|$ gives a maximum of $|V(\bb{x})|$. This implies $\bb{x}\in\Phi$. As there are $k-2$ complex solutions to $\tau^{2-k} = kV(\bb{x})$ for a fixed $\bb{x}$, this proves the claim. 
\end{proof}

\begin{lemma}
  \label{lem:Hessian_transformation}
  Let $\bb{z} = \tau\bb{x}\in\crit^*(g)$ and let $f_{\bb x}(\bb y) = \log \frac{V(\varphi_{\bb x}(\bb{y}))}{V(\bb{x})}$, in local coordinates $\bb y$ on the sphere given by $\varphi_{\bb {x}}:\R^{c-1} \to S^{c-1}$, with $\varphi_{\bb {x}}(\bb 0) = \bb {x}$. Then,
  \[
  \det (\Hess_{S^{c-1}} f(\bb x)) = \frac{k^{c-1}}{k-2} \det (\Hess g ( \bb{z} )).
  \]
\end{lemma}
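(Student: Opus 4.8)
The strategy is to pass to a polar-type coordinate system adapted to the cone over $S^{c-1}$, in which $g$ splits into a radial part — responsible for the factor $k-2$ — and a spherical part carrying $\Hess_{S^{c-1}}f$. Choose the chart $\varphi_{\bb x}$ to be real-analytic with $D\varphi_{\bb x}(\bb 0)$ an isometry onto the tangent space $T_{\bb x}S^{c-1}$; this is the normalization under which $\det\Hess_{S^{c-1}}f(\bb x)$ is unambiguous. Extend $\varphi_{\bb x}$ to a holomorphic parametrization of a neighbourhood of $\bb x$ in $\{x_1^2+\dots+x_c^2=1\}$ and set $\Theta(r,\bb y)=r\,\varphi_{\bb x}(\bb y)$, defined near $(\tau,\bb 0)\in\C\times\C^{c-1}$. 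The differential $J=D\Theta(\tau,\bb 0)$ has columns $\bb x$ and $\tau\,D\varphi_{\bb x}(\bb 0)$; since $\bb x\perp T_{\bb x}S^{c-1}$ and $\tau\neq0$ these are linearly independent, so $\Theta$ is a local biholomorphism with $\Theta(\tau,\bb 0)=\bb z$.

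Because $\varphi_{\bb x}$ lands on the quadric, $\sum_i\varphi_{\bb x}(\bb y)_i^2\equiv1$, and the homogeneity of $V$ yields
\[
(g\circ\Theta)(r,\bb y)=-\tfrac{r^2}{2}+r^k\,V(\varphi_{\bb x}(\bb y))=-\tfrac{r^2}{2}+r^k\,V(\bb x)\,e^{f_{\bb x}(\bb y)}.
\]
From $\bb z=\tau\bb x\in\crit^*(g)$ we use two facts established in the proof of Lemma~\ref{lem:critical_point_identification}: the relation $\tau^{2-k}=kV(\bb x)$ from \eqref{eq:tau_def} (so $r^kV(\bb x)=\tfrac{\tau^2}{k}$ at $r=\tau$, and in particular $V(\bb x)\neq0$), and the fact that $\bb x$ solves the spherical Lagrange equations for $V$, whence $\nabla_{\bb y}f_{\bb x}(\bb 0)=0$. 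Differentiating the displayed formula twice and evaluating at $(\tau,\bb 0)$ then shows that $(\tau,\bb 0)$ is a critical point of $g\circ\Theta$ with block-diagonal Hessian
\[
\Hess(g\circ\Theta)(\tau,\bb 0)=\begin{pmatrix}k-2 & \bb 0\\ \bb 0 & \tfrac{\tau^2}{k}\,\Hess_{S^{c-1}}f(\bb x)\end{pmatrix},
\]
the cross block vanishing exactly because $\nabla_{\bb y}f_{\bb x}(\bb 0)=0$. Hence $\det\Hess(g\circ\Theta)(\tau,\bb 0)=(k-2)(\tau^2/k)^{c-1}\det\Hess_{S^{c-1}}f(\bb x)$.

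On the other hand, $\bb z$ is a critical point of $g$, so the second-order chain rule gives $\Hess(g\circ\Theta)(\tau,\bb 0)=J^{\top}\,\Hess g(\bb z)\,J$, and therefore $\det\Hess(g\circ\Theta)(\tau,\bb 0)=(\det J)^2\det\Hess g(\bb z)$. From the columns of $J$ we read off $\det J=\tau^{c-1}\det[\,\bb x\mid D\varphi_{\bb x}(\bb 0)\,]$, and with the chosen normalization $[\,\bb x\mid D\varphi_{\bb x}(\bb 0)\,]$ is orthogonal, so $(\det J)^2=\tau^{2(c-1)}$. Equating the two expressions for $\det\Hess(g\circ\Theta)(\tau,\bb 0)$ and cancelling $\tau^{2(c-1)}$ gives $\det\Hess_{S^{c-1}}f(\bb x)=\tfrac{k^{c-1}}{k-2}\det\Hess g(\bb z)$, as claimed. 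Everything here is elementary; the only points needing care are the bookkeeping in the block-Hessian computation and pinning down the orthonormal normalization of $\varphi_{\bb x}$ (equivalently, one may keep a general chart, note $(\det J)^2=\tau^{2(c-1)}\det G$ with $G$ the first fundamental form at $\bb x$, and observe that $\det\Hess g(\bb z)$ then matches the metric-normalized intrinsic determinant) — neither is a serious obstacle.
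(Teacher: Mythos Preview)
Your argument is correct. Both proofs ultimately isolate the same ``radial eigenvalue'' $k-2$ and a tangential factor $k^{c-1}$, but they organize the computation differently. The paper invokes a standard Riemannian-Hessian identity (cited from Boumal) to obtain the matrix equality $\Hess_{S^{c-1}}f(\bb x)=k\,P_{\bb x}\,\Hess g(\bb z)\,P_{\bb x}^{\top}$, and then shows directly that $\bb z$ is an eigenvector of $\Hess g(\bb z)$ with eigenvalue $k-2$, so the determinant factors as claimed. You instead pull $g$ back through the polar map $\Theta(r,\bb y)=r\,\varphi_{\bb x}(\bb y)$, compute the Hessian of $g\circ\Theta$ explicitly as a block-diagonal matrix, and compare with the chain-rule expression $J^{\top}\Hess g(\bb z)\,J$. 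Your route is more self-contained (no external citation for the spherical Hessian formula) and makes the orthonormal normalization of $\varphi_{\bb x}$ explicit, which the paper leaves implicit; on the other hand, the paper's approach yields the stronger matrix-level identity, not just the equality of determinants. Your remark about handling a general chart via the first fundamental form is also a nice touch, since it clarifies exactly which ``$\det\Hess_{S^{c-1}}f$'' is meant.
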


\begin{proof}
    Using standard techniques from differential geometry, e.g.,~ Corollary 5.16 in \cite{boumal2023introduction}, we can rewrite the Hessian on the sphere as
    \[
    \Hess_{S^{c-1}} f(\bb x) = \frac{1}{V(\bb{x})} \left( P_{\bb{x}}  \mathrm{Hess}\, V(\bb{x}) P_{\bb{x}}^T - \langle \nabla V(\bb{x})\, ,\, \bb{x} \rangle \cdot \mathrm{Id}_{c-1}\right),
    \]
    where $P_{\bb{x}}$ is the $(c-1)\times c$ projection matrix onto the hyperplane orthogonal to $\bb{x}\in S^{c-1}$, namely the tangent space to $S^{c-1}$ at $\bb{x}$, and $\Hess V$ denotes the standard Hessian matrix of $V$ as a function on $\R^c$.
    Because $V$ is $k$-homogeneous, by the Euler relation one deduces that $\frac{1}{V(\bb{x})} \langle \nabla V(\bb{x})\, ,\, \bb{x} \rangle = k$, hence 
    \[
   \Hess_{S^{c-1}} f(\bb x) = \frac{1}{V(\bb{x})} P_{\bb{x}}  \mathrm{Hess} V(\bb{x}) P_{\bb{x}}^T - k\, \mathrm{Id}_{c-1} = \frac{\tau^{2-k}}{V(\bb{x})} P_{\bb{x}} \mathrm{Hess} V(\tau \bb{x}) P_{\bb{x}}^T - k\, \mathrm{Id}_{c-1},
    \]
    where we used the fact that the Hessian matrix of $V$ is homogeneous of degree $k-2$. 
    Changing to the Hessian of $g$, where the entries are by definition $\partial_i\partial_j g$, we get 
    \[
    \Hess_{S^{c-1}} f(\bb x) = \frac{\tau^{2-k}}{V(\bb{x})} P_{\bb{x}} \left( \mathrm{Id}_{c} +  \Hess g(\bb{z}) \right) P_{\bb{x}}^T - k\, \mathrm{Id}_{c-1}.
    \]
    Using the formula for $\tau$ in \eqref{eq:tau_def}, and the property $P_{\bb{x}}P_{\bb{x}}^T = \mathrm{Id}_{c-1}$, this reduces to 
    \[
    \Hess_{S^{c-1}} f(\bb x) = k P_{\bb{x}}  \Hess g(\bb{z}) P_{\bb{x}}^T.
    \]
    We claim that the Hessian matrix of $g$ has an eigenvector in the radial direction, with eigenvalue $\lambda_{\rm rad}$. Indeed, using Euler's relation on $\nabla V$, homogeneous of degree $k-1$, and using \eqref{eq:critical_eqs}, we get
    \[
    \left(\Hess g(\bb{z})\right) \bb{z} = -\bb{z} + \left(\Hess V(\bb{z})\right) \bb{z} = -\bb{z} + (k-1)\nabla V(\bb{z}) = (k-2) \bb{z}.
    \]
    Therefore, $\lambda_{\rm rad} = k-2$, and we denote the remaining eigenvalues $\lambda_1,\ldots,\lambda_{c-1}$. Then, $\prod_{i=1}^{c-1} \lambda_i = \det \left( P_{\bb{x}}  \Hess g(\bb{z}) P_{\bb{x}}^T \right)$ and we obtain
    \[
    \det \left( \Hess_{S^{c-1}} f(\bb x)\right) = \frac{k^{c-1}}{k-2} \det \left(\Hess g(\bb{z})\right) . \qedhere
    \]
\end{proof}

We can now prove the main result for the asymptotic behavior of $A(n)$ in terms of critical points of $g$, as $n\to \infty$.

\begin{proof}[Proof of Theorem~\ref{thm:main}]
  Lemma~\ref{lem:critical_point_identification} allows us to match points in $\Phi$ with points in $\Psi$. Take $\bb{z} = \tau\bb{x} \in \Psi$. Using \eqref{eq:tau_def}, we get
  \begin{align*}
    g(\bb{z}) &= -\frac{\tau^2}{2} + V(\tau\bb{x}) = -\frac{\tau^2}{2} + \tau^k V(\bb{x}) = \tau^2 \frac{2-k}{2k}\\
    V(\bb{x})^{n} &= \left( \frac{\tau^{2-k}}{k} \right)^{n} = k^{-n}\tau^{2n(1-\frac{k}{2})} = k^{-\frac{nk}{2}}\left(\frac{k}{2}-1\right)^{n(\frac{k}{2}-1)} (-g(\bb{z}))^{-n(\frac{k}{2}-1)}.
  \end{align*}
  Combining the latter with Lemma~\ref{lem:Hessian_transformation} and accounting for the multiplicities from Lemma~\ref{lem:critical_point_identification}, we can rewrite the asymptotic expression for $A(n)$ from Proposition~\ref{prop:asymptotics_critical_V} as 
  \begin{gather*}
     \frac{k^{\frac{nk}{2} + \frac{c-1}{2}} \left(\frac{k}{2}-1\right)^{-n(\frac{k}{2}-1)}\sqrt{\frac{k-2}{2}}}{\sqrt{8}\pi}\, \Gamma\!\left(n\left(\frac{k}{2}-1\right) \right) \frac{2}{k-2} 
    \sum_{\bb{z}\in\Psi} \frac{k^{-\frac{nk}{2}}\left(\frac{k}{2}-1\right)^{n(\frac{k}{2}-1)} (-g(\bb{z}))^{-n(\frac{k}{2}-1)}}{\sqrt{\frac{k^{c-1}}{k-2}} \sqrt{(-1)^{c-1} \det \left( \Hess g(\bb{z}) \right)}} \\
    = \frac{1}{2 \pi} \Gamma\!\left(n\left(\frac{k}{2}-1\right) \right) \sum_{\bb{z}\in\Psi} \frac{(-g(\bb{z}))^{-n(\frac{k}{2}-1)}}{\sqrt{(-1)^{c-1}\det \left( \Hess g(\bb{z}) \right)}}. 
  \end{gather*}
  Substituting $\ell = n(\frac{k}{2}-1) \in \Z$ proves the statement.
\end{proof}

Note that when $c=1$, the statement of Theorem~\ref{thm:main} reduces to \cite[Theorem 3.3.1]{Borinsky:2018mdl}, whereas for $c=2$ we get \cite[Theorem 5.3]{BMW24:Bicolored}. 
An open question regards the generalization of the result to graphs that are not necessarily regular. The difficulty lies in finding an integral representation over a suitable domain with a nice complex compactification, as the sphere in our case. This is not straightforward in the non-regular scenario and we leave it for future research.

\section{Average number of proper edge-colorings}\label{sec:average}

As an application of our main result, Theorem~\ref{thm:main}, we prove an asymptotic formula for the weighted sum of proper edge-colorings with $c$ colors of $k$-regular graphs. 
Combined with a known result on the number of $k$-regular vertex-labeled (multi)graphs without self-loops \cite{bender1978asymptotic}, this gives then rise to an asymptotic formula for the expected number of proper $c$-edge-colorings of a random such graph, which is stated in Theorem \ref{thm:multigraphs}. Here, \emph{random} means with respect to the uniform distribution on the set of $k$-regular vertex-labeled graphs without self-loops.\par

From now on, instead of edge-coloring we simply write coloring. As before, we assume $k\geq 3$. A $c$-coloring of a graph $G$ is a map $E_G \rightarrow \{1,\dots,c\}$, assigning a color to each edge. It is \emph{proper} if no vertex is incident to two edges of the same color. By weighted we mean that we take an $|\Aut(G)|^{-1}$-weighted sum over all $k$-regular graphs $G$, i.e.\ we wish to determine the large-$n$ asymptotics of 
\begin{equation}
  \label{eq:num_colorings}
  \cE^{c}_k(n) := \sum_{\substack{G~ k\text{-regular}\\ |V_G| = n}} \frac{\#\text{(proper $c$-colorings of $G$)}}{|\Aut(G)|}.
\end{equation}
The automorphism group in \eqref{eq:num_colorings} considers $G$ as a monocolored graph.
Clearly, in order for a $c$-coloring to be proper, we need $c\geq k$. In fact, a standard result from graph theory asserts that for any \emph{simple} $k$-regular graph $G$ there exists a $c$-coloring with $c = k+1$. For $c=k$, a $c$-coloring is also called a 1-factorization of $G$. The average number of such 1-factorizations has been computed, e.g.,~in \cite{robinson1994almost}, though there are two important differences to our number $\cE^{k}_k(n)$: the first is regarding the probability distribution on the space of graphs which we do not take to be uniform but weighted by $|\Aut(G)|^{-1}$. The second concerns the type of graphs: \cite{robinson1994almost} consider simple graphs, whereas our graphs are allowed to have multiple edges.

\begin{theorem}
  \label{thm:asymptotic_edge_colorings}
  Let $k\geq 3$ and assume $\ell = n\left(\tfrac{k}{2}-1\right) \in \Z_{>0}$. Then, for large $n\in \Z_{>0}$, we have
  \[
    \cE^{c}_k(n) \sim \left\{ \begin{array}{ll}
      \frac{(\ell-1)! \cdot 2^{k/2}}{2\pi} \left( \frac{2}{k-2} \right)^{\ell-1/2} & \text{ if } c=k, \text{ and } n \text{ even},\\
      \frac{(\ell-1)! \cdot \sqrt{k-2}}{2\pi} \left( \frac{k-1}{c-1} + 1 \right)^{(1-c)/2} \left( k \binom{c}{k} \right)^n c^{-\frac{nk}{2}}
\left( \frac{2k}{k-2} \right)^\ell & \text{ if } c > k,\\
      0 & \text{ else.}
    \end{array}\right.
  \]
\end{theorem}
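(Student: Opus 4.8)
The plan is to exhibit $\cE^c_k(n)$ as a special instance of the weighted count $A(n)$ from \eqref{eq:AnDef} and then apply Theorem~\ref{thm:main} to the associated polynomial $g$. The first step is the reduction. A proper $c$-coloring of a $k$-regular graph is the same datum as a $c$-edge-coloring in which every vertex has multidegree a $0/1$-vector of weight $k$; in particular such graphs have no self-loops and no monochromatic parallel edges. Fixing a monocolored graph $G$ and letting $\Aut(G)$ act on its set of proper $c$-colorings, the orbit--stabilizer theorem gives $\#(\text{proper }c\text{-colorings of }G)/|\Aut(G)| = \sum_{[G']} 1/|\Aut(G')|$, the sum running over isomorphism classes of $c$-edge-colored graphs $G'$ with underlying graph $G$. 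Summing over $G$ yields
\[
  \cE^c_k(n) = A(n)\qquad\text{for } \Lambda_{\bb w} = 1 \text{ if } \bb w\in\{0,1\}^c,\ \Lambda_{\bb w}=0 \text{ otherwise}
\]
(note $\bb w! = 1$ on this support). If $c<k$ there is no composition of $k$ into parts $\le 1$, so $A(n)=0$; if $c=k$ and $n$ is odd, each color class of a proper coloring would be a perfect matching on an odd vertex set, which is impossible, so $\cE^k_k(n)=0$ as well. This settles the ``else'' branch.

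For the remaining cases the polynomial is $V(\bb x) = e_k(x_1,\dots,x_c)$, the $k$-th elementary symmetric polynomial, so $g(\bb x) = -\tfrac12\sum_i x_i^2 + e_k(\bb x)$. Next I would pin down $\Phi$, the set of maxima of $|V|$ on $S^{c-1}$. Maclaurin's inequality $\bigl(e_k(\bb x)/\tbinom ck\bigr)^{1/k} \le \tfrac1c\sum_i x_i$ (valid for $\bb x\ge 0$), Cauchy--Schwarz $\sum_i x_i\le\sqrt c$ on the sphere, and the triangle inequality $|e_k(\bb x)|\le e_k(|\bb x|)$ together give $\max_{S^{c-1}}|V| = \tbinom ck c^{-k/2}$, with equality forcing $|x_i| = c^{-1/2}$ for all $i$ and, when $c>k$, all $x_i$ of equal sign. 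Hence for $c>k$ one has $\Phi = \{\pm c^{-1/2}(1,\dots,1)\}$, so $\Phi/\sim$ is a single point and Lemma~\ref{lem:critical_point_identification} gives $|\Psi| = k-2$. For $c=k$, however, $V$ degenerates to the monomial $x_1\cdots x_k$, whose modulus is invariant under all $2^k$ sign changes, so $\Phi$ has $2^k$ points, $|\Phi/\sim| = 2^{k-1}$, and $|\Psi| = (k-2)\,2^{k-1}$; this extra factor $2^{k-1}$ is precisely what makes the power of $2$ differ between the two branches of the theorem.

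It remains to evaluate the ingredients of \eqref{eq:main_asymptotic} at the points of $\Psi$. For $\bb z = \tau\bb x \in \Psi$ the relations derived in the proof of Theorem~\ref{thm:main} give $-g(\bb z) = \tfrac{k-2}{2k}\tau^2$ together with $\tau^{2\ell} = (k\,V(\bb x))^{-n}$. Since $e_k$ is multilinear, $\Hess g(\bb z) = -\mathrm{Id}_c + \Hess e_k(\bb z)$, where $\Hess e_k(\bb z)$ is the hollow symmetric matrix whose $(i,j)$-entry is $e_{k-2}$ of the coordinates of $\bb z$ other than the $i$-th and $j$-th. Evaluated on the (signed) diagonal this equals $\tfrac{k-1}{c-1}(J-\mathrm{Id}_c)$ for $c>k$ (with $J$ the all-ones matrix) and $\bb{\epsilon}\bb{\epsilon}^T-\mathrm{Id}_k$ with $\bb{\epsilon}\in\{\pm1\}^k$ for $c=k$; in both cases $\Hess g(\bb z)$ has the simple eigenvalue $k-2$ and the eigenvalue $-\bigl(1+\tfrac{k-1}{c-1}\bigr)$ (equal to $-2$ when $c=k$) with multiplicity $c-1$. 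In particular $\det\Hess g(\bb z)\neq 0$, so the non-degeneracy hypothesis of Theorem~\ref{thm:main} is satisfied, and $(-1)^{c-1}\det\Hess g(\bb z)$ equals $(k-2)\bigl(1+\tfrac{k-1}{c-1}\bigr)^{c-1}$ for $c>k$ and $(k-2)2^{k-1}$ for $c=k$, independently of the chosen point of $\Psi$.

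The $k-2$ lifts of a given $\bb x$ differ by multiplication of $\tau$ by a $(k-2)$-th root of unity $\zeta$, and since $\zeta^{-2\ell} = (\zeta^{k-2})^{-n} = 1$ they all contribute the same value to \eqref{eq:main_asymptotic}; for $c=k$ the additional sign factor distinguishing the $2^{k-1}$ classes is $(\prod_i\epsilon_i)^n$, which equals $1$ when $n$ is even and produces a cancellation of the leading term (consistent with $\cE^k_k(n)=0$) when $n$ is odd. Feeding the values above into Theorem~\ref{thm:main}, using $2\ell/(k-2)=n$ together with the identities $k\tbinom ck = c\tbinom{c-1}{k-1}$ and $\tbinom{c-2}{k-2} = \tfrac{k-1}{c-1}\tbinom{c-1}{k-1}$, and simplifying $(k-2)/\sqrt{k-2} = \sqrt{k-2}$, produces the two displayed formulas. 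The one genuinely delicate step in this plan is the determination of $\Phi$ for $c>k$: one must rule out that any other stationary value of $|e_k|$ on the sphere --- most notably the ``boundary'' critical points obtained by setting $c-k$ coordinates to $0$, whose value is $k^{-k/2}$ --- ties the diagonal value $\tbinom ck c^{-k/2}$. The Maclaurin--Cauchy--Schwarz chain settles this ($\tbinom ck c^{-k/2} > k^{-k/2}$ for $c>k$) and simultaneously explains why $c=k$ must be treated separately, the polynomial $e_k$ there collapsing to a monomial.
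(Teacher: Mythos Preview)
Your proposal is correct and follows essentially the same route as the paper: reduce $\cE^c_k(n)$ to $A(n)$ for $V=e_k$ via orbit--stabilizer (Lemma~\ref{lem:coloring_as_colored_graph}), locate $\Phi$ on the sphere, compute $\det\Hess g$ at the lifts in $\Psi$, and plug into Theorem~\ref{thm:main}. The only cosmetic differences are your use of Maclaurin's inequality in place of the paper's Schur-concavity argument (Lemma~\ref{lem:max_all_coordinates_equal}) and your uniform eigenvalue computation of $\Hess g(\bb z)$ in place of the paper's row-reduction for the case $c=k$ (Lemma~\ref{lem:Hess_det}).
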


We prepare the proof of Theorem~\ref{thm:asymptotic_edge_colorings} with two lemmata. A proper $c$-coloring corresponds to an edge-colored graph $G$ arising from the elementary symmetric polynomial
\[
  V_k^c := e_k(x_1,\dots,x_c) = \sum_{1\leq a_1 < a_2 <\dots < a_k \leq c} x_{a_1}x_{a_2}\dots x_{a_k}.
\]
We obtain the following reformulation as an edge-colored graph counting problem.

\begin{lemma}
  \label{lem:coloring_as_colored_graph}
  With vertex weights $\{\Lambda_{\bb{w}}\}_{\bb w \in\mathcal C_k^c}$ from $V_k^c$, we have $\cE^{c}_k(n) = A(n)$.
\end{lemma}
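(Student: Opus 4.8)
The plan is to show that the weighted count $\cE^c_k(n)$ of proper $c$-colorings coincides with the weighted edge-colored graph count $A(n)$ from \eqref{eq:An} once the vertex markings $\Lambda_{\bb w}$ are chosen to encode the elementary symmetric polynomial $V_k^c = e_k(x_1,\dots,x_c)$. Concretely, the choice of $V_k^c$ means setting $\Lambda_{\bb w} = \bb w!\,[\bb x^{\bb w}]\,e_k(x_1,\dots,x_c)$, which is $1$ when $\bb w$ has all entries in $\{0,1\}$ (i.e.\ $\bb w$ is the indicator vector of a $k$-subset of colors) and $0$ otherwise. The point of this choice is that a vertex of an edge-colored graph can receive a nonzero weight only if, at that vertex, the $k$ incident half-edges all carry distinct colors — which is exactly the local condition defining a proper edge-coloring.

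First I would unwind both sides in terms of the combinatorial objects they count. On the $A(n)$ side, by Proposition~\ref{prop:gf_graphs} specialized to $k$-regular graphs (as in the proof of Lemma~\ref{lem:An_exp_integral}), $A(n)$ is the sum over isomorphism classes $G$ of edge-colored $k$-regular half-edge labeled graphs on $n$ vertices of $\prod_{v}\Lambda_{\deg(v)}/|\Aut(G)|$. With the markings above, the product $\prod_v \Lambda_{\deg(v)}$ is $1$ if every vertex has multidegree in $\{0,1\}^c$ (equivalently, no vertex sees two half-edges of the same color) and $0$ otherwise. An edge-colored $k$-regular graph all of whose vertices satisfy this is precisely a $k$-regular graph $G_0$ (forget the colors) together with a proper $c$-edge-coloring of it. So $A(n) = \sum_{G_0}\#(\text{proper }c\text{-colorings of }G_0)/|\Aut(G_0)|$, which is the definition \eqref{eq:num_colorings} of $\cE^c_k(n)$ — provided the automorphism-group bookkeeping matches.

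The one genuine subtlety — and the step I expect to be the main obstacle — is verifying that $|\Aut(G)|$ for the edge-colored half-edge labeled graph equals $|\Aut(G_0)|$ for the underlying monocolored graph appearing in \eqref{eq:num_colorings}, so that the weights line up term by term. The resolution is that a proper coloring of a $k$-regular graph with $k$ or more colors has no color-symmetry freedom at a vertex: the colors at each vertex are all distinct, so once we fix the coloring there is nothing extra to permute. More precisely, an automorphism of the edge-colored half-edge labeled graph $\Gamma=(V,E_1,\dots,E_c)$ is a tuple of bijections on the $H_i$ preserving all partitions; forgetting colors gives a half-edge permutation preserving $(V,E)$, i.e.\ an automorphism of the monocolored half-edge labeled graph, and conversely any automorphism of the monocolored graph automatically permutes half-edges of each color among themselves (since a proper coloring assigns distinct colors at every vertex, a graph automorphism sends each edge to an edge and is forced to act within each color class). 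Hence the forgetful map $\Aut(\Gamma)\to\Aut(\Gamma_0)$ is a bijection, and the weights agree.

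I would then assemble these observations: summing the term-by-term identity $\prod_v\Lambda_{\deg(v)}/|\Aut(G)| = \#(\text{proper colorings extending }G)/|\Aut(G_0)|$ over all $k$-regular edge-colored classes $G$, and regrouping by the underlying monocolored class $G_0$, yields $A(n)=\cE^c_k(n)$. (When $c<k$ both sides are $0$: $\cE^c_k(n)=0$ since no proper coloring exists, and $A(n)=0$ since $\mathcal C_k^c$ contains no composition with all parts $\le 1$, so $V_k^c\equiv 0$ and every summand vanishes.) This completes the proof.
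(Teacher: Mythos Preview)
Your identification of the vertex weights and the reduction to properly colored graphs is fine, but the automorphism bookkeeping step contains a genuine error. You claim that the forgetful map $\Aut(\Gamma)\to\Aut(\Gamma_0)$ is a bijection, arguing that ``a graph automorphism sends each edge to an edge and is forced to act within each color class.'' This is false: an automorphism of the monocolored graph can permute edges of different colors. The paper's own example right after the lemma illustrates this: for the two-vertex triple-edge graph with a proper $3$-coloring, $|\Aut(\Gamma_0)| = |\Sym_3\rtimes\Sym_2| = 12$ while $|\Aut(\Gamma)| = |\Sym_2| = 2$. Your claimed term-by-term identity $1/|\Aut(G)| = \#(\text{proper colorings})/|\Aut(G_0)|$ would then read $1/2 = 6/12$, which happens to be correct here---but not because the automorphism groups coincide.

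The correct relation, which is what the paper proves, is that $\Aut(\Gamma)$ is the \emph{stabilizer} of the given coloring under the natural action of $\Aut(\Gamma_0)$ on the set of proper colorings of $\Gamma_0$. By orbit--stabilizer, if $N$ distinct colorings of $\Gamma_0$ yield edge-colored graphs isomorphic to $\Gamma$, then $|\Aut(\Gamma)| = |\Aut(\Gamma_0)|/N$. Summing $1/|\Aut(\Gamma)|$ over the edge-colored isomorphism classes lying over $[\Gamma_0]$ then gives $\sum N/|\Aut(\Gamma_0)| = \#(\text{proper colorings})/|\Aut(\Gamma_0)|$, as needed. Your regrouping step implicitly requires exactly this orbit--stabilizer count, not the (false) equality of automorphism groups; once you replace your bijectivity claim with this argument, the proof goes through and coincides with the paper's.
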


\begin{proof}
  It remains to show the following. Suppose there are exactly $N$ distinct colorings of $G$ such that the induced edge-colored graphs $\Gamma_1,\dots,\Gamma_N$ are isomorphic. Then $|\Aut(\Gamma_i)| = |\Aut(G)|/N$. \par
  Let $G$ have $2w$ many half-edges, while $\Gamma_i$ (for any $i$) has $2w_j$ half-edges of the $j$th color. Note that $w = w_1+\dots + w_c$. The group $\Sym_{2w}$ naturally acts on the half-edge labels of $\Gamma$, but also on the disjoint union $\sqcup_{1\leq j\leq c} H_j$ of half-edge labels of $\Gamma_i$. In particular, we obtain an action of $\Aut(G)$ on the set of edge-colored half-edge labeled graphs, and $\Gamma_1,\dots,\Gamma_N$ form an orbit of this action. Since the stabilizer is $\Aut(\Gamma_i)$, the claim follows from the orbit-stabilizer theorem.
\end{proof}

\begin{example}
  Consider the graph \scalebox{0.5}{\begin{tikzpicture}[x=1ex,y=1ex,baseline={([yshift=-1.5ex]current bounding box.center)}]
    \coordinate (vm);
    \coordinate [left=4 of vm] (v0);
    \coordinate [right=4 of vm] (v1);
    \draw[black, line width = 2.5] (v0) to (v1);
    \draw[black, line width = 2.5] (v0) to[bend left=45] (v1);
    \draw[black, line width = 2.5] (v0) to[bend right=45] (v1);
    \filldraw (v0) circle (2.8pt);
    \filldraw (v1) circle (2.8pt);
\end{tikzpicture}
  \begin{figure}[ht!]
    \centering
    \scalebox{0.7}{\begin{tikzpicture}[x=1ex,y=1ex,baseline={([yshift=-1.5ex]current bounding box.center)}]
    \coordinate (vm);
    \coordinate [left=4 of vm] (v0);
    \coordinate [right=4 of vm] (v1);
    \draw[cb-red, line width = 2.5] (v0) to[bend left=45] (v1);
    \draw[cb-yellow, line width = 2.5] (v0) to (v1);
    \draw[cb-green, line width = 2.5] (v0) to[bend right=45] (v1);
    \filldraw (v0) circle (2.8pt);
    \filldraw (v1) circle (2.8pt);
\end{tikzpicture}
    \scalebox{0.7}{\begin{tikzpicture}[x=1ex,y=1ex,baseline={([yshift=-1.5ex]current bounding box.center)}]
    \coordinate (vm);
    \coordinate [left=4 of vm] (v0);
    \coordinate [right=4 of vm] (v1);
    \draw[cb-red, line width = 2.5] (v0) to[bend left=45] (v1);
    \draw[cb-green, line width = 2.5] (v0) to (v1);
    \draw[cb-yellow, line width = 2.5] (v0) to[bend right=45] (v1);
    \filldraw (v0) circle (2.8pt);
    \filldraw (v1) circle (2.8pt);
\end{tikzpicture}
    \scalebox{0.7}{\begin{tikzpicture}[x=1ex,y=1ex,baseline={([yshift=-1.5ex]current bounding box.center)}]
    \coordinate (vm);
    \coordinate [left=4 of vm] (v0);
    \coordinate [right=4 of vm] (v1);
    \draw[cb-yellow, line width = 2.5] (v0) to[bend left=45] (v1);
    \draw[cb-red, line width = 2.5] (v0) to (v1);
    \draw[cb-green, line width = 2.5] (v0) to[bend right=45] (v1);
    \filldraw (v0) circle (2.8pt);
    \filldraw (v1) circle (2.8pt);
\end{tikzpicture}
    \scalebox{0.7}{\begin{tikzpicture}[x=1ex,y=1ex,baseline={([yshift=-1.5ex]current bounding box.center)}]
    \coordinate (vm);
    \coordinate [left=4 of vm] (v0);
    \coordinate [right=4 of vm] (v1);
    \draw[cb-yellow, line width = 2.5] (v0) to[bend left=45] (v1);
    \draw[cb-green, line width = 2.5] (v0) to (v1);
    \draw[cb-red, line width = 2.5] (v0) to[bend right=45] (v1);
    \filldraw (v0) circle (2.8pt);
    \filldraw (v1) circle (2.8pt);
\end{tikzpicture}
    \scalebox{0.7}{\begin{tikzpicture}[x=1ex,y=1ex,baseline={([yshift=-1.5ex]current bounding box.center)}]
    \coordinate (vm);
    \coordinate [left=4 of vm] (v0);
    \coordinate [right=4 of vm] (v1);
    \draw[cb-green, line width = 2.5] (v0) to[bend left=45] (v1);
    \draw[cb-red, line width = 2.5] (v0) to (v1);
    \draw[cb-yellow, line width = 2.5] (v0) to[bend right=45] (v1);
    \filldraw (v0) circle (2.8pt);
    \filldraw (v1) circle (2.8pt);
\end{tikzpicture}
    \scalebox{0.7}{\begin{tikzpicture}[x=1ex,y=1ex,baseline={([yshift=-1.5ex]current bounding box.center)}]
    \coordinate (vm);
    \coordinate [left=4 of vm] (v0);
    \coordinate [right=4 of vm] (v1);
    \draw[cb-green, line width = 2.5] (v0) to[bend left=45] (v1);
    \draw[cb-yellow, line width = 2.5] (v0) to (v1);
    \draw[cb-red, line width = 2.5] (v0) to[bend right=45] (v1);
    \filldraw (v0) circle (2.8pt);
    \filldraw (v1) circle (2.8pt);
\end{tikzpicture}
  \end{figure}\\
  These are all isomorphic as edge-colored graphs. However, their automorphism groups are decreased compared to the monocolored graph, namely
  \[
    \Aut(\scalebox{0.5}{\begin{tikzpicture}[x=1ex,y=1ex,baseline={([yshift=-1.5ex]current bounding box.center)}]
    \coordinate (vm);
    \coordinate [left=4 of vm] (v0);
    \coordinate [right=4 of vm] (v1);
    \draw[black, line width = 2.5] (v0) to (v1);
    \draw[black, line width = 2.5] (v0) to[bend left=45] (v1);
    \draw[black, line width = 2.5] (v0) to[bend right=45] (v1);
    \filldraw (v0) circle (2.8pt);
    \filldraw (v1) circle (2.8pt);
\end{tikzpicture}
    \coordinate (vm);
    \coordinate [left=4 of vm] (v0);
    \coordinate [right=4 of vm] (v1);
    \draw[cb-red, line width = 2.5] (v0) to[bend left=45] (v1);
    \draw[cb-yellow, line width = 2.5] (v0) to (v1);
    \draw[cb-green, line width = 2.5] (v0) to[bend right=45] (v1);
    \filldraw (v0) circle (2.8pt);
    \filldraw (v1) circle (2.8pt);
\end{tikzpicture}
  \]
  The relation $|\Aut(\scalebox{0.5}{\begin{tikzpicture}[x=1ex,y=1ex,baseline={([yshift=-1.5ex]current bounding box.center)}]
    \coordinate (vm);
    \coordinate [left=4 of vm] (v0);
    \coordinate [right=4 of vm] (v1);
    \draw[black, line width = 2.5] (v0) to (v1);
    \draw[black, line width = 2.5] (v0) to[bend left=45] (v1);
    \draw[black, line width = 2.5] (v0) to[bend right=45] (v1);
    \filldraw (v0) circle (2.8pt);
    \filldraw (v1) circle (2.8pt);
\end{tikzpicture}
    \coordinate (vm);
    \coordinate [left=4 of vm] (v0);
    \coordinate [right=4 of vm] (v1);
    \draw[cb-red, line width = 2.5] (v0) to[bend left=45] (v1);
    \draw[cb-yellow, line width = 2.5] (v0) to (v1);
    \draw[cb-green, line width = 2.5] (v0) to[bend right=45] (v1);
    \filldraw (v0) circle (2.8pt);
    \filldraw (v1) circle (2.8pt);
\end{tikzpicture}
\end{example}

By Lemma \ref{lem:coloring_as_colored_graph}, to prove Theorem \ref{thm:asymptotic_edge_colorings} we need to compute the asymptotics of $A(n)$ for $V^{c}_k$. To invoke the asymptotic formula \eqref{eq:main_asymptotic}, it is most convenient to compute the critical point set $\Psi$ by first computing the maxima of $|V^c_k|$ on $S^{c-1}$ and then mapping these to $\Psi$ via Lemma~\ref{lem:critical_point_identification}.

\begin{lemma}
  \label{lem:max_all_coordinates_equal}
  The maximum of $S^{c-1} \rightarrow \R,~ \bb{x} \mapsto |V^{c}_k(\bb{x})|$ is attained at 
  \begin{equation} 
  \label{eq:maximizer_positive}
    \bb{x} = (c^{-1/2},c^{-1/2},\dots,c^{-1/2}).
  \end{equation}
\end{lemma}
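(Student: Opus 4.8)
The plan is to maximize $|e_k(\bb x)|$ over the sphere using Lagrange multipliers together with symmetry, and to extract enough information about critical points to identify the global maximizer as the all-equal point. First I would set up the Lagrange conditions: a critical point $\bb x \in S^{c-1}$ of $e_k$ satisfies $\partial_i e_k(\bb x) = 2\lambda x_i$ for all $i$. Since $\partial_i e_k(x_1,\dots,x_c) = e_{k-1}(\bb x \setminus x_i)$, the elementary symmetric polynomial in the remaining $c-1$ variables, the system reads $e_{k-1}(\bb x \setminus x_i) = 2\lambda x_i$. The natural guess to analyze is the fully symmetric solution $x_i = \pm c^{-1/2}$; evaluating there gives $e_k(\bb x) = \binom{c}{k} c^{-k/2}$ (up to sign), which is the candidate value for the maximum.

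The key step is to show no other critical point does better. Here I would use the standard trick for symmetric functions: at a critical point, look at two coordinates $x_i \neq x_j$. Subtracting the two Lagrange equations and using the identity $e_{k-1}(\bb x\setminus x_i) - e_{k-1}(\bb x\setminus x_j) = (x_j - x_i)\, e_{k-2}(\bb x \setminus \{x_i,x_j\})$, one gets $2\lambda(x_i - x_j) = (x_j-x_i)\,e_{k-2}(\bb x\setminus\{x_i,x_j\})$, i.e.\ $e_{k-2}(\bb x\setminus\{x_i,x_j\}) = -2\lambda$ whenever $x_i \neq x_j$. This rigidity forces the coordinates to take at most two distinct values, so a critical point is determined (up to permutation) by how many coordinates equal each of two values $a,b$ with a sphere constraint. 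One then reduces to a one- or two-parameter optimization and checks, using $|e_k| \le \binom{c}{k} c^{-k/2}$, that the symmetric point wins. An alternative, possibly cleaner route: apply the AM--GM / Maclaurin-type inequality directly. For $\bb x$ with nonnegative coordinates, Maclaurin's inequality gives $\big(e_k(\bb x)/\binom{c}{k}\big)^{1/k} \le \big(e_1(\bb x)/c\big)$, but $e_1$ is not bounded on the sphere in a useful way; instead use the power-mean bound $e_k(\bb x) \le \binom{c}{k}\big(\tfrac{1}{c}\sum x_i^2\big)^{k/2} = \binom{c}{k} c^{-k/2}$, which follows from Newton's inequalities applied to $x_1^2,\dots$ — actually the clean statement is $e_k(x_1,\dots,x_c)^2 \le \binom{c}{k}^2 / \binom{c}{k}_{\!?}\cdots$; I would need to pin down the exact symmetric-function inequality, but the target inequality $|e_k(\bb x)| \le \binom{c}{k} c^{-k/2}$ on $S^{c-1}$ with equality iff $|x_1| = \dots = |x_c|$ is classical (it is the statement that the $\ell^2$-normalized elementary symmetric polynomial is maximized at the balanced point, provable by Lagrange multipliers as above or by a smoothing/majorization argument).

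To handle the absolute value and complex-sign issues: since $e_k$ has real coefficients and we optimize $|e_k|$ over the real sphere, it suffices to bound $e_k(\bb x)^2$; replacing $x_i \mapsto |x_i|$ only increases $|e_k(\bb x)|$ when $k$... — care is needed here because sign cancellations could in principle help, so I would argue that $|e_k(\bb x)| \le e_k(|x_1|,\dots,|x_c|)$ by the triangle inequality applied termwise (every monomial $x_{a_1}\cdots x_{a_k}$ has absolute value $|x_{a_1}|\cdots|x_{a_k}|$), reducing to the nonnegative orthant where the symmetric-function inequality applies cleanly.

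\textbf{Main obstacle.} The main difficulty is the rigorous symmetric-function inequality $|e_k(\bb x)| \le \binom{c}{k} c^{-k/2}$ on $S^{c-1}$ with the characterization of the equality case: the Lagrange-multiplier analysis produces several families of critical points (parametrized by the split into two coordinate values), and ruling all of them out — or invoking the right classical majorization/Newton-inequality result — is where the real work lies. Checking that $(c^{-1/2},\dots,c^{-1/2})$ beats partially-balanced competitors requires a short but genuine one-variable calculus argument.
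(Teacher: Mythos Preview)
Your reduction to the nonnegative orthant via the termwise triangle inequality, $|e_k(\bb x)| \le e_k(|x_1|,\dots,|x_c|)$, is correct and is exactly the paper's first move. But you then throw away the clean finish: you write down Maclaurin's inequality $\bigl(e_k/\binom{c}{k}\bigr)^{1/k} \le e_1/c$ and dismiss it because ``$e_1$ is not bounded on the sphere in a useful way.'' It is. On $S^{c-1}\cap\R^c_{\ge 0}$, Cauchy--Schwarz against the all-ones vector gives $e_1(\bb x)=\sum_i x_i \le \sqrt{c}$, with equality iff all $x_i$ are equal. Substituting, $e_k(\bb x) \le \binom{c}{k}(\sqrt{c}/c)^k = \binom{c}{k}\,c^{-k/2}$, with equality exactly at the all-equal point. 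That is the whole proof. The paper phrases the same idea slightly differently, invoking Schur-concavity of $e_k$ on $\R^c_{\ge 0}$ together with the identical Cauchy--Schwarz bound on $\sum_i x_i$; Maclaurin and Schur-concavity are two faces of the same majorization fact here.

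Your primary route via Lagrange multipliers is not wrong in spirit, but it is heavier than necessary and has a gap as written. The subtraction identity $e_{k-1}(\bb x\setminus x_i)-e_{k-1}(\bb x\setminus x_j)=(x_j-x_i)\,e_{k-2}(\bb x\setminus\{x_i,x_j\})$ only tells you that $e_{k-2}(\bb x\setminus\{x_i,x_j\})=-2\lambda$ for every unequal pair; it does \emph{not} by itself force ``at most two distinct values.'' With three distinct values $x_i,x_j,x_l$ you only deduce $e_{k-3}(\bb x\setminus\{x_i,x_j,x_l\})=0$, which is a constraint, not a contradiction. Making the Lagrange argument rigorous requires additional casework, whereas the Maclaurin/Schur-concavity route you already had in hand closes the lemma in two lines.
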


\begin{proof}
  Since $S^{c-1}$ is symmetric and $V^c_k$ has only positive coefficients, the maximum is attained on $S_+ := S^{c-1}\cap \R^c_{\geq 0}$ and is a maximum of $V^c_k$. On $S_+$, the polynomial $V^c_k$ is Schur-concave. By an application of the Cauchy--Schwarz inequality to $\bb{x}$ and the all-ones vector, we obtain $\sum_{i=1}^{c} x_i \leq \sqrt{c}$, with equality if and only if all $x_i$ are equal. Then the statement follows from the majorization property of Schur-concave functions.
\end{proof}

\textit{Proof of Theorem \ref{thm:asymptotic_edge_colorings}.} If either $\ell$ or $n$ are not integral, $\cE^{c}_k(n) = 0$ since $n = |V_G|$ and $\ell = \chi(G) = |V_G| - |E_G|$ are combinatorial quantities. Let us now assume that both $\ell, n\in \Z_{>0}$.\par
To obtain all maxima of $|V^{c}_k(\bb{x})|$ we need to insert signs to the coordinates of \eqref{eq:maximizer_positive} such that every monomial in $V^c_k$ is positive, or every monomial is negative. Here, we need to distinguish two cases: if $k=c$, an arbitrary sign configuration is allowed, so there are $2^k$ maxima. If, on the other hand, $c>k$, then there are only two such sign configurations, the all $+$ and the all $-$ configuration. We study these cases separately.

\subsection*{The case $c=k$.}

All maxima of $|V^k_k|$ are $\Phi = \{ ( \pm k^{-1/2}, \pm k^{-1/2}, \dots, \pm k^{-1/2} ) \}$. 
As in the proof of Theorem~\ref{thm:main}, we have $\tau^{2-k} = kV^k_k(\bb{x}) = \sigma k^{(2-k)/2}$, where $\sigma$ is the product of signs in the coordinates of $\bb{x}$. 
Therefore, $\tau = \sqrt{k} \zeta_{\sigma}$, where $\zeta_{\sigma}$ is a $(k-2)^{\text{th}}$ root of $\sigma$. Then, for $\bb{z} = \tau\bb{x}$, 
\begin{equation}
  \label{eq:g_c=k}
  g(\bb{z}) = \tau^2 \frac{2-k}{2k} = \frac{(2-k)\zeta_\sigma^2}{2}.
\end{equation}
If $n = \frac{2\ell}{k-2}$ is odd, then the summands in $\sum_{\bb{z}\in\Psi} (-g(\bb{z}))^{-\ell}$ cancel each other and $A(n) = 0$. This should be expected since any proper edge-coloring with $k$ colors induces a perfect matching which in particular implies that the number of vertices needs to be even. If $n$ is even, $(-g(\bb{z}))^{-\ell}$ does not depend on $\sigma$.

\begin{lemma}
  \label{lem:Hess_det}
  For $c=k$ and any $\bb{z}\in\Psi$, the Hessian determinant is 
  \[\det \left( \Hess g(\bb{z}) \right) = (-1)^{k-1} 2^{k-1}(k-2).\]
\end{lemma}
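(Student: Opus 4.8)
The plan is to compute $\Hess g(\bb z)$ explicitly and exploit the high symmetry of the critical points. Since $c=k$, the elementary symmetric polynomial $V^k_k = e_k(x_1,\dots,x_k)$ is simply the monomial $x_1x_2\cdots x_k$, so $g(\bb x) = -\tfrac12\sum_{i=1}^k x_i^2 + x_1x_2\cdots x_k$. Its Hessian has constant diagonal, $\partial_i^2 g = -1$ for all $i$ (the product term is affine in each variable separately), and off-diagonal entries $\partial_i\partial_j g(\bb x) = \prod_{l\neq i,j} x_l$ for $i\neq j$.

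Next I evaluate at a point $\bb z \in \Psi$. Recall from the discussion preceding the statement that $\bb z = \tau\bb x$ with $\bb x = k^{-1/2}(\epsilon_1,\dots,\epsilon_k)$, $\epsilon_i\in\{\pm1\}$, $\sigma = \prod_i\epsilon_i$, and $\tau = \sqrt k\,\zeta_\sigma$ where $\zeta_\sigma^{k-2}=\sigma$; hence $z_i = \zeta_\sigma\epsilon_i$ and in particular $\prod_{l=1}^k z_l = \zeta_\sigma^k\sigma$. Therefore, for $i\neq j$,
\[
  \partial_i\partial_j g(\bb z) = \prod_{l\neq i,j} z_l = \frac{\prod_l z_l}{z_i z_j} = \frac{\zeta_\sigma^k \sigma}{\zeta_\sigma^2\,\epsilon_i\epsilon_j} = \zeta_\sigma^{k-2}\sigma\,\epsilon_i\epsilon_j = \epsilon_i\epsilon_j,
\]
using $\zeta_\sigma^{k-2}=\sigma$, $\epsilon_i^{-1}=\epsilon_i$, and $\sigma^2=1$. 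Thus $\Hess g(\bb z)$ is the matrix with $-1$ on the diagonal and $\epsilon_i\epsilon_j$ in position $(i,j)$ for $i\neq j$.

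Finally, I conjugate by the diagonal sign matrix $D=\mathrm{diag}(\epsilon_1,\dots,\epsilon_k)$: the entries of $D\,\Hess g(\bb z)\,D$ are $\epsilon_i\epsilon_j\cdot(\Hess g(\bb z))_{ij}$, which equals $-1$ on the diagonal and $\epsilon_i^2\epsilon_j^2=1$ off the diagonal, so $D\,\Hess g(\bb z)\,D = J - 2\,\mathrm{Id}_k$ with $J$ the $k\times k$ all-ones matrix. Since $\det(D)^2=1$, we get $\det\Hess g(\bb z) = \det(J - 2\,\mathrm{Id}_k)$, and the eigenvalues of $J - 2\,\mathrm{Id}_k$ are $k-2$ (once, with eigenvector the all-ones vector) and $-2$ (with multiplicity $k-1$), giving $\det = (k-2)(-2)^{k-1} = (-1)^{k-1}2^{k-1}(k-2)$, as claimed. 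The computation is elementary; the only real idea is the conjugation by $D$, which also explains why the value is the same for every $\bb z\in\Psi$. I do not anticipate any obstacle beyond this bookkeeping.
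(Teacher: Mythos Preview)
Your proof is correct and follows essentially the same approach as the paper: compute the Hessian entries as $-1$ on the diagonal and $\epsilon_i\epsilon_j$ off the diagonal, then remove the signs to reduce to a single symmetric matrix whose determinant is evaluated. The only cosmetic difference is that the paper pulls out the signs via row multiplications and then performs row reduction, whereas you conjugate by $D=\mathrm{diag}(\epsilon_1,\dots,\epsilon_k)$ and read off the eigenvalues of $J-2\,\mathrm{Id}_k$; both yield the same result.
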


\begin{proof}
  The Hessian is easily computed as
  \[
    (\Hess g(\bb{z}))_{ij} = \left\{ \begin{array}{ll}
      -1 & \text{if } i=j, \\
      \frac{z_1z_2\dots z_k}{z_i z_j} & \text{else.} 
    \end{array} \right.
  \]
  Here, $z_i = \tau x_i = \sigma_i \zeta_\sigma$, where $\sigma_i$ denotes the sign of the $i^{\rm th}$ coordinate. Therefore, $(\Hess g(\bb{z}))_{ij} = \sigma_i\sigma_j$ for $i\neq j$. After multiplying the $i^{\rm th}$ row by $\sigma_i$ and performing some row operations we find
  \begin{align*}
    \det \left( \Hess g(\bb{z}) \right) & = \sigma \det \begin{pmatrix}
      -\sigma_1 & \sigma_2 & \dots & \sigma_k \\
      \sigma_1 & -\sigma_2 & \dots & \sigma_k \\
      \vdots & \vdots & \ddots & \vdots \\ 
      \sigma_1 & \sigma_2 & \dots & -\sigma_k \\
    \end{pmatrix}
    = (-1)^{k-1} \sigma \det \begin{pmatrix}
      \sigma_1 & \ast & \dots & \ast \\
      0 & 2\sigma_2 & \dots & \ast \\
      \vdots & \vdots & \ddots & \vdots \\ 
      0 & 0 & \dots & 2(k-2)\sigma_k \\
    \end{pmatrix} \\
    & = (-1)^{k-1} 2^{k-1} (k-2) \sigma^2 = (-1)^{k-1} 2^{k-1} (k-2). \qedhere
  \end{align*}
\end{proof}

Plugging the two computations from \eqref{eq:g_c=k} and Lemma \ref{lem:Hess_det} into \eqref{eq:main_asymptotic} we obtain 
\begin{equation}
  \label{eq:num_k_chromatic}
  \cE^{k}_k(n) \sim \frac{1}{2\pi} \Gamma(\ell) 2^k \frac{k-2}{2} \frac{\left( \frac{k-2}{2} \right)^{-\ell}}{2^{(k-1)/2} \sqrt{k-2}} = \frac{\Gamma(\ell)2^{k/2}}{2\pi} \left( \frac{2}{k-2} \right)^{\ell-1/2}.
\end{equation}

\subsection*{The case $c > k$.} 

Here, the maxima of $|V^c_k|$ on $S^{c-1}$ are the two points 
\[\Phi = \left\{\pm (c^{-1/2}, c^{-1/2},\dots, c^{-1/2})\right\}.\] 
As in the previous case, let $\tau$ satisfy $\tau^{2-k} = k V^c_k(\bb{x}) = \sigma k \binom{c}{k} c^{-k/2}$, with $\sigma\in\{\pm 1\}$. The sign $\sigma$ is negative iff $\bb{x}$ has all negative signs and $k$ is odd. The evaluation at $g$ is then given by
\begin{equation}
  \label{eq:g_evaluation}
  g(\bb{z}) = \tau^2 \frac{2-k}{2k} = \left(\sigma k \binom{c}{k}\right)^{2/(2-k)} c^{k/(k-2)} \frac{2-k}{2k}.
\end{equation}
If $k$ is odd and $n = 2\ell/(k-2) \in \Z_{>0}$, then $n$ must be even, so the sign disappears in $(-g(\bb{z}))^{-\ell}$. \par

\begin{lemma}
  \label{lem:Hess_det2}
  Let $c>k$ and denote $\tilde{g} := \frac{k-1}{c-1}$. Then,
  for any $\bb{z}\in\Psi$, the Hessian determinant is 
  \[\det \left( \Hess g(\bb{z}) \right) = (-1)^{c-1}(\tilde{g} +1)^{c-1} (\tilde{g}(c-1) -1).\]
\end{lemma}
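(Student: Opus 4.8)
The plan is to compute the Hessian of $g$ at a critical point $\bb z = \tau \bb x$ with $\bb x = \pm(c^{-1/2},\dots,c^{-1/2})$ explicitly and then evaluate its determinant by reducing to a rank-one-plus-diagonal structure. First I would write down $\Hess g(\bb z)$ entrywise. Since $g(\bb x) = -\tfrac12\sum_i x_i^2 + e_k(\bb x)$, the diagonal entries are $-1 + \partial_i^2 e_k(\bb z)$ and the off-diagonal entries are $\partial_i\partial_j e_k(\bb z)$. Because all coordinates of $\bb z$ are equal, say $z_i = \zeta$ for all $i$ (where $\zeta = \tau \cdot(\pm c^{-1/2})$), the second derivatives of $e_k$ are constant along the diagonal and constant off the diagonal: $\partial_i^2 e_k = 0$ (since $e_k$ is multilinear, so squared variables do not appear), while $\partial_i\partial_j e_k(\bb z) = e_{k-2}(\text{the other } c-2 \text{ variables evaluated at } \zeta) = \binom{c-2}{k-2}\zeta^{k-2}$ for $i\neq j$. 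Thus $\Hess g(\bb z) = -\mathrm{Id}_c + \beta(\mathbb{1}\mathbb{1}^T - \mathrm{Id}_c)$ where $\mathbb 1$ is the all-ones vector and $\beta = \binom{c-2}{k-2}\zeta^{k-2}$.

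Next I would express $\beta$ in terms of the quantities already computed in the surrounding text. From \eqref{eq:g_evaluation} and \eqref{eq:tau_def} we have $\tau^{2-k} = \sigma k\binom{c}{k}c^{-k/2}$, hence $\zeta^{k-2} = (\tau(\pm c^{-1/2}))^{k-2} = \tau^{k-2} c^{-(k-2)/2} = \bigl(\sigma k\binom{c}{k}\bigr)^{-1} c^{k/2} c^{-(k-2)/2} = \bigl(\sigma k\binom{c}{k}\bigr)^{-1} c$. Therefore $\beta = \binom{c-2}{k-2}\bigl(\sigma k\binom{c}{k}\bigr)^{-1} c$; using $k\binom{c}{k} = c\binom{c-1}{k-1}$ and $\binom{c-2}{k-2}/\binom{c-1}{k-1} = (k-1)/(c-1) = \tilde g$, this collapses to $\beta = \sigma^{-1}\tilde g = \sigma\tilde g$ (as $\sigma = \pm1$). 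So $\Hess g(\bb z) = -(1+\beta)\mathrm{Id}_c + \beta\,\mathbb 1\mathbb 1^T$ with $\beta = \sigma\tilde g$. Then the matrix determinant lemma gives $\det\Hess g(\bb z) = (-(1+\beta))^c\bigl(1 - \tfrac{\beta}{1+\beta}\cdot c\bigr) = (-(1+\beta))^{c-1}\bigl(-(1+\beta) + \beta c\bigr) = (-1)^{c-1}(1+\beta)^{c-1}\bigl(\beta(c-1) - 1\bigr)$.

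Finally I would check the sign $\sigma$ cancels: when $k$ is odd the only relevant case has $\sigma$ forced so that $n$ is even (as noted in the text right before the lemma), and when $k$ is even every maximizer has $\sigma = +1$ since all monomials of $e_k$ are positive on such points; more carefully, $\Psi$ consists only of those critical points mapping to genuine maxima of $|V^c_k|$, and for these $\sigma=+1$ unless $k$ is odd in which case the two choices are conjugate and contribute equally, so we may take $\sigma = 1$, giving $\beta = \tilde g$ and hence $\det\Hess g(\bb z) = (-1)^{c-1}(\tilde g+1)^{c-1}(\tilde g(c-1)-1)$, as claimed. The main obstacle is bookkeeping: correctly identifying that the mixed second derivative of $e_k$ at the all-equal point is $\binom{c-2}{k-2}\zeta^{k-2}$ (this uses that $e_k$ is squarefree, so $\partial_i\partial_j e_k$ is $e_{k-2}$ in the remaining variables), and then tracking the binomial identities to see that $\beta$ simplifies exactly to $\tilde g$ (up to the sign $\sigma$) — after that the determinant is a one-line application of the matrix determinant lemma.
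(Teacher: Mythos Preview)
Your approach is essentially the same as the paper's: compute the Hessian as $-(\tilde g + 1)\,\mathrm{Id}_c + \tilde g\,\mathbb{1}\mathbb{1}^T$ and read off the determinant (you use the matrix determinant lemma, the paper uses the eigenvalues of this rank-one perturbation of a scalar matrix---these are equivalent one-liners).

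There is, however, a slip in your sign bookkeeping. When you write $\zeta^{k-2} = (\tau(\pm c^{-1/2}))^{k-2} = \tau^{k-2} c^{-(k-2)/2}$ you silently drop the factor $(\pm 1)^{k-2}$ coming from the sign of $\bb x$. Carrying it through gives $\beta = (\pm 1)^{k-2}\sigma\,\tilde g$, not $\sigma\tilde g$. Your subsequent attempt to argue that ``we may take $\sigma = +1$'' does not work: when $k$ is odd and $\bb x$ is the all-negative point one has $\sigma = -1$, and the corresponding $\bb z$ \emph{are} in $\Psi$ (the lemma claims the formula for every $\bb z \in \Psi$, not merely for those whose contribution survives in the sum). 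The clean fix, which is what the paper does, is to check directly that $(\pm 1)^{k-2}\sigma = +1$ in each of the three cases (positive $\bb x$; negative $\bb x$ with $k$ even; negative $\bb x$ with $k$ odd), so that $\beta = \tilde g$ always and no residual sign discussion is needed.
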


\begin{proof}
To compute the Hessian determinant note that 
\[
  (\Hess g(\bb{z}))_{ij} = \left\{ \begin{array}{ll}
    e_{k-2}(z_1,\dots,\hat{z}_i,\dots,\hat{z}_j,\dots,z_c) & \text{ if } i\neq j, \\
    -1 & \text{ if } i=j.
  \end{array} \right.
\]
Since $z_i = \tau x_i = \pm \zeta_\sigma \left( k \binom{c}{k} c^{-k/2} \right)^{1/(2-k)} c^{-1/2}$, we obtain 
\[
  e_{k-2}(z_1,\dots,\hat{z}_i,\dots,\hat{z}_j,\dots,z_c) = (\pm 1)^{k-2} \sigma \cdot \frac{c \binom{c-2}{k-2}}{k \binom{c}{k}} = \frac{k-1}{c-1} = \tilde{g}.
\]
Indeed, the expression above is independent of the signs: if $\bb{x}$ has positive signs, every sign is positive. If $\bb{x}$ has negative signs and $k$ is even, then $\sigma = +1$ and $(-1)^{k-2}\sigma = +1$. Lastly, if $\bb{x}$ has negative signs and $k$ is odd, then $\sigma = -1$, so $(-1)^{k-2}\sigma = +1$. Therefore, we have
\[
  \det \left( \Hess g(\bb{z}) \right) = \det \begin{pmatrix}
      -1 & \tilde{g} & \dots & \tilde{g} \\
      \tilde{g} & -1 & \dots & \tilde{g} \\
      \vdots & \vdots & \ddots & \vdots \\ 
      \tilde{g} & \tilde{g} & \dots & -1 \\
    \end{pmatrix}
    = (-1)^{c-1}(\tilde{g} +1)^{c-1} (\tilde{g}(c-1) -1),
\]
where the last equality follows since $\Hess g(\bb{z}) = (\tilde{g}\mathbb{1}_{c\times c} - (\tilde{g} + 1)\mathrm{Id}_{c})$ has eigenvalues $(\tilde{g}(c-1) -1)$ and $-(\tilde{g} +1)$, the latter with multiplicity $c-1$ (here, $\mathbb{1}_{c\times c}$ is the all-ones matrix of size $c\times c$). 
\end{proof}

Combining Lemma \ref{lem:Hess_det2} with \eqref{eq:g_evaluation}, we get from Theorem \ref{thm:main}
\begin{align*}
    \cE^{c}_k(n) & \sim \frac{\Gamma(\ell)(k-2)}{2\pi} \frac{\left( \left(k\binom{c}{k}\right)^{2/(2-k)} c^{k/(k-2)} \left(\frac{k-2}{2k}\right) \right)^{-\ell}}{\left(\frac{k-1}{c-1} + 1 \right)^{(c-1)/2} \left(\frac{k-1}{c-1} (c-1) -1 \right)^{1/2}} \\
    & = \frac{\Gamma(\ell)\sqrt{k-2}}{2\pi} \left( \frac{k-1}{c-1} + 1 \right)^{(1-c)/2} \left( k \binom{c}{k} \right)^n c^{\frac{k\ell}{(2-k)}} \left( \frac{2k}{k-2} \right)^\ell.
\end{align*}
This concludes the proof of Theorem~\ref{thm:asymptotic_edge_colorings}. \qed

To prove Theorem~\ref{thm:multigraphs}, we combine the count from Theorem~\ref{thm:asymptotic_edge_colorings} with a known result from \cite{bender1978asymptotic} to obtain the average number of proper $c$-colorings of $k$-regular vertex-labeled graphs without self-loops. This number is denoted ${E}^{c}_k(n)$, where, as before, $n$ is the number of vertices. 

\begin{proof}[Proof of Theorem~\ref{thm:multigraphs}]
  Let $G$ be a properly $c$-colored $k$-regular graph. As it is properly colored, such a graph cannot have a self-loop. An automorphism in $\Aut(G)$ cannot permute the multiple edges between vertices, because all these edges are required to have different colors. Therefore, no nontrivial $\alpha \in \Aut(G)$ fixes all vertices of $G$ and the group homomorphism $\Aut(G) \rightarrow \Sym_n$ that maps an automorphism of $G$ to its induced permutation on the vertex set is injective.
Let $\mathcal M_n^{k,c}$ be the set of 
properly $c$-colored $k$-regular graphs without self-loops with a vertex-labeling. An action of 
$\Sym_n$ on $\mathcal M_n^{k,c}$ permutes the vertex labels. By the preceding argument, the stabilizer of a vertex-labeled graph under this action equals the automorphism group as we defined it in the first part of the paper. It follows from the orbit-stabilizer theorem that $n!\, \cE^{c}_k(n)$ counts the number of vertex-labeled properly $c$-colored $k$-regular graphs without self-loops. 

Equivalently, $n!\, \cE^{c}_k(n)$ is the number of tuples $(M^{(1)}, \ldots, M^{(c)}) \in (\{0,1\}^{n\times n})^c$ of $\{0,1\}$-matrices that fulfill the following constraints:
\begin{align*}
M^{(i)}_{\ell,m} &= M^{(i)}_{m,\ell}, &
M^{(i)}_{\ell,\ell} &= 0, &
\sum_{\ell} M^{(i)}_{m,\ell} &\leq 1, & \text{and }&&
\sum_{i,\ell} M^{(i)}_{m,\ell} &= k  \,,
\end{align*}
where $\ell,m$ and $i$ 
range over $1,\ldots,n$ and $1,\ldots,c$ 
respectively. \par

To obtain the average number of proper $c$-colorings of a $k$-regular vertex-labeled graph without self-loops ${E}^{c}_k(n)$, we need to divide $n!\, \cE^{c}_k(n)$ by the number of $k$-regular vertex-labeled graphs without self-loops. This number is known from the literature and can be obtained from specializing the formula in \cite[Theorem 1]{bender1978asymptotic}. We explain how to use this formula below, matching with the notation in \cite{bender1978asymptotic}.\par

Let $M = (M_{ij})_{ij} = \mathbb{1}_{n\times n} - \mathrm{Id}_{n}$ be the $n\times n$ matrix with zeros on the diagonal and ones elsewhere. Let $\bb{r} = k\cdot \mathbb{1}_{n\times 1}$ be the vector with all entries equal to $k$. The set of $k$-regular vertex-labeled graphs without self-loops can be identified with the set
\[
  \GG(M,\bb{r},k) := \left\{ (g_{ij})_{ij} \in \Z^{n\times n} \text{ symmetric, } g_{ij}\in \{0,1,\dots,k\},\, g_{ij} = 0 \text{ if } M_{ij} = 0,\, \sum_j g_{ij} = r_i  \right\}.
\]
Then from \cite[Theorem 1]{bender1978asymptotic} 
we obtain
\[
  |\GG(M,\bb{r},k)| \sim \left( \frac{kn}{e} \right)^{kn/2} \frac{\sqrt{2} \exp\left( \frac{1}{4}(k^2 -4k +3) \right)}{(k!)^n} 
\]
Dividing $n!\, \cE^{c}_k(n)$ by $|\GG(M,\bb{r},k)|$ 
and using Stirling's formula to get 
\[
  \sqrt{2} \left( \frac{nk}{e} \right)^{nk/2} (k!)^{-n}
  \sim
  n! \frac{\sqrt{k-2}}{2\pi} \Gamma(\ell) \left(\frac{2k}{k-2}\right)^{\ell} ((k-1)!)^{-n} 
  \, ,
\]
gives the statement.
\end{proof}

\printbibliography

\vspace{1em}
\noindent \textsc{Michael Borinsky}\\
\textsc{Perimeter Institute}\\
\textsc{31 Caroline St N, Waterloo, N2L 2Y5, Ontario, Canada}\\
\url{mborinsky@perimeterinstitute.ca}\\

\noindent \textsc{Chiara Meroni}\\
\textsc{ETH Institute for Theoretical Studies}\\
\textsc{Scheuchzerstrasse 70, 8006 Zürich, Switzerland}\\
\url{chiara.meroni@eth-its.ethz.ch}\\

\noindent \textsc{Maximilian Wiesmann}\\
\textsc{Center for Systems Biology Dresden \\
Max Planck Institute of Molecular Cell Biology and Genetics\\
Pfotenhauerstrasse 108, 01307 Dresden, Germany}\\
\url{wiesmann@pks.mpg.de}

\end{document}